\newtheorem{theorem}{Theorem}
\newtheorem{lemma}{Lemma}
\newtheorem{corollary}{Corollary}
\newtheorem{prop}{Proposition}
\newtheorem{problem}{Problem}
\begin{document}

\begin{frontmatter}

%% Title, authors and addresses

%% use the tnoteref command within \title for footnotes;
%% use the tnotetext command for theassociated footnote;
%% use the fnref command within \author or \address for footnotes;
%% use the fntext command for theassociated footnote;
%% use the corref command within \author for corresponding author footnotes;
%% use the cortext command for theassociated footnote;
%% use the ead command for the email address,
%% and the form \ead[url] for the home page:
%% \title{Title\tnoteref{label1}}
%% \tnotetext[label1]{}
%% \author{Name\corref{cor1}\fnref{label2}}
%% \ead{email address}
%% \ead[url]{home page}
%% \fntext[label2]{}
%% \cortext[cor1]{}
%% \address{Address\fnref{label3}}
%% \fntext[label3]{}

\title{Efficient proper embedding  of a daisy cube} %into a hypercube }

%% use optional labels to link authors explicitly to addresses:
%% \author[label1,label2]{}
%% \address[label1]{}
%% \address[label2]{}

\author{Aleksander Vesel} 

\address{Faculty of Natural Sciences and Mathematics, University of Maribor,  SI-2000 Maribor, Slovenia }

\begin{abstract}
For a set $X$ of binary words of length $h$ the daisy cube $Q_h(X)$ is defined as the 
subgraph of the hypercube $Q_h$ induced by the set of all vertices on shortest paths that connect  vertices of $X$ with the vertex $0 ^h$. A vertex in the intersection of all of these paths is a minimal vertex of a daisy cube. A graph $G$ isomorphic to a daisy cube admits several isometric embeddings  into a hypercube. We show that an isometric embedding is proper if and only if the label 
$0 ^h$ is assigned to a minimal vertex of $G$. This result allows us to devise an algorithm
which finds a proper embedding of a graph isomorphic to a daisy cube into a hypercube in linear time.
\end{abstract}

\begin{keyword}
daisy cube \sep  partial cube  \sep  isometric embedding \sep  proper embedding \sep algorithm

%% keywords here, in the form: keyword \sep keyword

%% PACS codes here, in the form: \PACS code \sep code

%% MSC codes here, in the form: \MSC code \sep code
%% or \MSC[2008] code \sep code (2000 is the default)

\end{keyword}

\end{frontmatter}

%% \linenumbers

%% main text

\section{Introduction}

Hypercube is one of the most important interconnection scheme for multicomputers. 
An obstacle for a direct application of a hypercube is the fact that the 
number of different hypercubes is very small with respect to the wanted (maximum) number 
of nodes, that is to say,  the number of vertices of a hypercube is always equal to a power of two.
For that reason, several other  interconnection topologies for multicomputers based on hypercubes
have been  proposed. 
These classes of graphs have been devised with the intention of preserving most important 
properties of  a hypercube while allowing more variety of resulting specific graphs.    
The corresponding families of graphs are mostly various subgraphs of a hypercube, 
of which its isometric subgraphs, i.e.\ its induced subgraphs that preserve distances, are
of particular importance. A crucial problem in this scope is to find 
and embedding of a graph of this type to a hypercube (see for example \cite{epp, imkl-00, Win}). 

Quite recently, a new concept which led   to the class of graphs called daisy cubes has been proposed in \cite{daisy}.  It has been shown that daisy cubes are isometric subgraphs of 
a hypercube, moreover, they include several other important classes of graphs, e.g. Fibonacci and, 
Lucas cubes. It is known that the cube-complement of a daisy cube is also a daisy cube \cite{jaz}.
It is also proven that a class of graphs, which is of significant importance in chemical graph theory,  
also belongs to daisy cubes \cite{petra}.

In \cite{andrej}, daisy cubes are characterized  in terms of an expansion procedure. 
For a given graph $G$ isomorphic to a daisy cube, but without the corresponding
embedding into a hypercube, an algorithm
which finds a proper embedding of $G$ into a hypercube in $O(mn)$
time is also presented.  

Several  challenging open problems with respect to daisy cubes have been 
proposed  \cite{daisy, andrej}.  In this paper, we focus  our study to the following one.

\begin{problem}
Is there a faster way of finding the vertex $0^h$ of a daisy cube $Q_h(X)$ than the one
provided in \cite{andrej}?
\end{problem}

In \cite{andrej}, it is also noted that a positive answer to Problem 1 would give a linear time algorithm for finding a proper embedding  of a graph isomorphic to a daisy cube.

The paper is organized as follows.
In the next section some basic definitions, concepts and results needed in the sequel are given.
In Section 3, a notion of a minimal vertex of a daisy cube is introduced. 
Some necessary and sufficient conditions that a minimal vertex has to fulfill are also given.  
In Section 4, it is shown that an isometric embedding of a graph 
isomorphic to a daisy cube, but without the corresponding
embedding into a hypercube, can be constructed in linear time even if a minimal vertex of a daisy cube 
is unknown.
The last section shows that an isometric embedding devised in the Section 4 can be applied 
in order to find a proper embedding within the same time bound. 

\section{Preliminaries}

Let $B = \{0, 1\}$. If $b$ is a word of length $h$ over $B$, that is, 
$b = (b_1, \ldots, b_h) \in B^h$,
then we will briefly write $b$ as $b_1 \ldots b_h$.
If $x, y \in B^h$, 
then the {\em Hamming distance} $H(x,y)$ between $x$ and $y$ is the number 
of positions in which $x$ and $y$ differ. 

We will use $[n]$ for the set $\{1, 2, \ldots, n\}$.  %and $[k,n]$ for the set $\{k, k+1, \ldots, n\}$ in this paper.
%If $s$ is a string, then $|s|$ denotes the {\em length of $s$}. 

% If $|s| = n$, then we write $s=s_1s_2\ldots s_n$. 

The {\em  hypercube} of order $h$ or simply {\em  $h$-cube}, 
denoted by $Q_h$, is the graph $G=(V,E)$ where the vertex
set $V(G)$ is the set of all binary strings
$b=b_1 b_2 \ldots b_h$, $b_i \in \{0,1\}$ for all $i \in [h]$,
 and  two vertices $x,y \in V(G)$ are adjacent in $Q_h$ if and only if %$x$ and $y$ differ in precisely one place. 
the Hamming distance between $x$ and $y$ is equal to one.

For a binary string $b=b_1b_2\ldots b_n$, let $\overline b_i= 1 - b_i$ for  $i \in [h]$. 
The {\em weight} of $u \in B^h$ is 
$w(u) = \sum_{i=1}^h u_i$, in
other words, $w(u)$ is the number of 1s in the word $u$. 
For the concatenation of bits the power notation will be used, for instance 
$0^h = 0\ldots 0 \in  B ^h$.

If $G$ is a connected graph, then the distance $d_G(u, v)$ (or simply $d(u,v)$)  between vertices $u$ and $v$ is the length of a shortest $uv,v$-path 
(that is, a shortest path between $u$ and $v$) in $G$. 
The set of vertices lying on all shortest $u, v$-paths is called
the {\em interval} between $u$ and $v$ and denoted by $I_{G}(u, v)$ \cite{mulder3}.
 We will also write $I(u, v)$ when $G$ will be clear from the context.

If $G$ is a graph and $X \subseteq V(G)$, then $G[X]$ %(or simply $\langle 	 X \rangle $) 
denotes the subgraph of $G$ induced by $X$.

If $u$ is a vertex of a graph $G$, let $N(u)$ denote the set of neighbors of $u$. 
Moreover, let $N[u] = N(u) \cup \{  u \}$. 

Let $G=(V,E)$ be a graph. A mapping  $\alpha : V(G) \rightarrow V(Q_h)$
is an {\em isometric embedding} of $G$ into $Q_h$ if $d_{Q_h}(\alpha(u),\alpha(v))=d_G(u,v)$ 
for every $u,v\in V(G)$. 
 We will denote the $i$-th coordinate of $\alpha $ with $\alpha_{(i)}$, i.e. 
$\alpha = (\alpha_{(1)}, \alpha_{(2)}, \ldots, \alpha_{(h)})$.

Let $G$ be a connected graph.
The isometric dimension of $G$ is the smallest integer
$h$ such that $G$ admits an isometric embedding into $Q_h$. Isometric subgraphs of hypercubes are called {\em partial cubes}.

Let $\le$ be a partial order on $V(Q_h)$ defined with
$u_1 \ldots u_h \le v_1 \ldots v_h$ if $u_i \le v_i$
holds for all $i \in [h]$.  For $X \subseteq V(Q_h)$ the graph induced by the set 
$\{v \in V(Q_h) \, | \, v \le x$ for some $x \in X \}$ is a {\em daisy cube generated by $X$ 
or order $h$ } and denoted by $Q_h(X)$. 

Let also $\lor$, $\land$ and $\oplus$  denote the bitwise OR, bitwise AND and bitwise exclusive OR operator, respectively.

%If $u$ and $v$ are vertices of a graph $G$, then the {\em interval} $I_G(u, v)$ between $u$ and 
%$v$ in $G$ is the set of vertices lying on shortest $u, v$-paths, that is, $I_G(u, v) = \{z : d(u, v) =
%d(u,z)+d(z, v)\}$. 

By a slight abuse of definition, we will say that a graph $G$ is a daisy cube if it is isomorphic to a daisy cube generated by some $X \subseteq V(Q_h)$.
If  $G$ is a daisy cube $Q_h(X)$, then $G$ may admit
more than one isometric embedding of $G$ into the $h$-cube.
 Let $X_G \subseteq B^h$ be the set of labels of the vertices of 
 $G$ assigned by an isometric embedding $\alpha$, i.e. $X_G = \alpha(V(G))$. 
 We say that $\alpha$ is a {\em proper embedding} of $G$
 if $G$ is isomorphic to $Q_h(X_G)$. %Otherwise the embedding is improper.

%By a slight abuse of definition, we will say that a graph $G$ is a daisy cube if it is isomorphic to a daisy cube generated by some $X \subseteq V(Q_h)$.
%More formally, $G$ is a daisy cube if there exists an isometric embedding $\alpha$ of $G$ into $Q_h$ such that  
 %the image of $\alpha$ equals $X\subseteq V(Q_h)$  and  $Q_h(X)$ is a daisy cube generated by $X$. 
 Let $G$ be a graph isomorphic to a daisy cube  of order $h$ and let $\alpha$
 denote its  proper embedding. Note that  
 every permutation of indices of $\alpha$ yields basically the ``same'' embedding. 
 We say that proper   embeddings  $\alpha$ and $\beta$ are {\em equivalent} if 
$\beta$ can be obtained from $\alpha$ by a permutation of its indices.

%Note that if $x, y \in X$ and $y \le x$, then $Q_n(X) = Q_n(X \setminus \{y\})$. 
For a daisy cube $Q_h(X)$, let  $\widehat{X}$ denote the antichain consisting of 
the maximal elements of the poset $(X,\le )$. It was shown in \cite{daisy} that $Q_h(X)  = Q_h(\widehat{X})$.  
Hence, for a given set $X \subseteq B_n$ it is enough to consider the antichain $\widehat{X}$.
The vertices of $Q_h(X)$ from $\widehat{X}$ the are called the {\em maximal } vertices of $Q_h(X)$.
More generally, if $G$ is a daisy cube of dimension $h$ with a proper embedding $\alpha$ such that
for exactly one $v\in V(G)$ we have $\alpha (v)=0^h$, then 
$ X \subseteq V(G)$ is the set of {\em maximal vertices of $G$ with respect to $v$} 
if  $G \cong  Q_h(\alpha(X))$ and $\widehat{\alpha( X)}=\alpha(X)$. 
We also say that $v$ is the  {\em minimal  vertex  of $G$ with respect to $\alpha$}.
 
 The following result shows that a daisy cube is a subgraph of $Q_h$ induced 
 by the union of intervals between $0^h$ and the vertices from $\widehat{X}$ \cite{daisy}.

\begin{lemma}  \label{induced}
Let  $X \subseteq B^h$.  Then   
%$Q_h(X) \cong \langle 	\cup_{x\in \widehat X} I(0^n, x) \rangle 	$.
$Q_h(X) = Q_h[\cup_{x\in \widehat X} I(0^h, x)]	$.
\end{lemma}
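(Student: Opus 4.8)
The plan is to reduce the whole statement to a single structural fact about intervals in the hypercube, namely that for every $x \in B^h$ the interval $I(0^h, x)$ coincides with the down-set $\{v \in B^h \mid v \le x\}$ of $x$ with respect to the partial order $\le$. Once this is in place, the lemma follows almost immediately from the definition of a daisy cube together with the already-recorded identity $Q_h(X) = Q_h(\widehat{X})$.

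First I would prove the interval characterization. Fix $x \in B^h$ and recall that in $Q_h$ the graph distance equals the Hamming distance, so $d(0^h, x) = w(x)$. A vertex $v$ lies in $I(0^h, x)$ precisely when $d(0^h, v) + d(v, x) = d(0^h, x)$, i.e.\ when $w(v) + H(v, x) = w(x)$. To evaluate this I would split the coordinate positions into four types according to the pair $(v_i, x_i)$ and count; writing $b = |\{ i \mid v_i = 1,\ x_i = 0\}|$, a short computation yields $w(v) + H(v, x) = w(x) + 2b$. Hence the interval condition holds if and only if $b = 0$, that is, if and only if there is no coordinate where $v$ has a $1$ while $x$ has a $0$; this is exactly the statement $v \le x$ (note that $0^h \le v$ is automatic, so no lower-bound condition intervenes). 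Therefore $I(0^h, x) = \{ v \in B^h \mid v \le x \}$.

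With this in hand, taking the union over the maximal elements shows that $\cup_{x \in \widehat{X}} I(0^h, x)$ is precisely the set of those vertices $v \in B^h$ for which $v \le x$ holds for some $x \in \widehat{X}$, which is by definition exactly the vertex set inducing $Q_h(\widehat{X})$. Thus $Q_h[\cup_{x \in \widehat{X}} I(0^h, x)] = Q_h(\widehat{X})$. Finally, invoking the identity $Q_h(X) = Q_h(\widehat{X})$ quoted above from \cite{daisy}, I obtain $Q_h(X) = Q_h[\cup_{x \in \widehat{X}} I(0^h, x)]$, as required.

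The only genuine content lies in the interval characterization, and even there the main obstacle is mild. One has to be careful that the interval is taken in the full hypercube $Q_h$, so that the relevant distances are plain Hamming distances and the additive condition $d(0^h, v) + d(v, x) = d(0^h, x)$ applies directly, rather than working inside the daisy cube. One must also check that the counting identity $w(v) + H(v, x) = w(x) + 2b$ is valid across all four coordinate cases. Everything else is routine bookkeeping at the level of the defining down-sets.
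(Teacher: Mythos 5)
Your proof is correct and complete. Note, however, that the paper itself offers no proof of this lemma: it is imported verbatim from the Klav\v zar--Mollard paper \cite{daisy} with only a citation, so there is no in-paper argument to compare against. Your argument supplies exactly the right content: the coordinate count $w(v)+H(v,x)=w(x)+2b$ with $b=|\{i \mid v_i=1,\ x_i=0\}|$ is the correct identity (the four-case bookkeeping checks out), and it yields $I(0^h,x)=\{v \mid v\le x\}$, after which the union over $\widehat X$ is by definition the vertex set of $Q_h(\widehat X)$, and the quoted identity $Q_h(X)=Q_h(\widehat X)$ finishes the job. (That last identity you could also have proved in one line -- every element of the finite poset $(X,\le)$ lies below some maximal element -- rather than citing it, which would make the proof fully self-contained.) Your caveat about taking the interval in $Q_h$ rather than in the daisy cube is well placed; that is indeed the intended reading, and it is the hypothesis under which $d(0^h,v)+d(v,x)=d(0^h,x)$ reduces to plain Hamming arithmetic.
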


\section{Minimal vertices of a daisy cube }

If $u \in V(Q_h(X))$, then $I(0^n, u)$  induces a  $w(u)$-cube in $Q_h(X)$. 
Note that if $x \in \widehat X$, then  the cube induced by  
$I(0^n, x)$  is maximal in $Q_h(X)$, i.e., 
it is not contained in any other cube that belongs to $Q_h(X)$. 

If $x \in B^h$, let $S^x$ denote the set of indices of $v$ with $x_i=1$, i.e., 
$S^x = \{ i \, | \, x_i=1, i \in [h] \}$.

Let $v \in  B^h$ and let $^v\!\beta :  B^h \rightarrow B^h$ be the function defined as 
$$^v\!\beta_{(i)}(u)  =
        \left \{\begin{array}{ll}
              u_i,  \; v_i = 0 \\
              \bar u_i,  \; v_i = 1 \\             
         \end{array} \right. $$

%The next result shows that a graph isomorphic to daisy cube may admit more than one minimal vertex - one minimal vertex for every proper embedding. 

\begin{lemma} \label{minimal}
%Let  $X \subseteq B^h$.  
Let  $G$ be a graph isomorphic to a daisy cube of order $h$ with a proper embedding $\alpha$ such that $\alpha(v^0)=0^h$ and $\widehat X \subseteq V(G)$  is its corresponding maximal set. 
%Let  $Q_h(X)$ be a daisy cube. 
%If $v \in \cap_{x\in \widehat X} I(0^n, x)$, then 
 If $v \in \cap_{x\in  \widehat X} I(v^0, x)$, then 
 
 (i) $^v\!\beta$ restricted to $\alpha(V(G))$ is a bijection that maps  to $\alpha(V(G)) $,
 
%  (ii)  $Q_h(X) \cong Q_h(\alpha^v( \widehat X ))$.

(ii) $^v\!\beta \circ \alpha$ is a proper embedding of $G$ with the minimal vertex $v$ and 
the  maximal vertex set
$Y =\{ y \, | \,  ^v\!\beta (\alpha(y)) = \alpha(x); \, x \in \widehat X \}$.
\end{lemma}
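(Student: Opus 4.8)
The plan is to recognise the map $^v\!\beta$ as a translation of the hypercube. Directly from its definition, $^v\!\beta(u) = u \oplus \alpha(v)$ for every $u \in B^h$, since the $i$-th coordinate of $u$ is complemented exactly when the $i$-th coordinate of (the label) $\alpha(v)$ equals $1$. Two consequences are immediate and will be used throughout. First, $^v\!\beta$ is an involution, because $^v\!\beta(^v\!\beta(u)) = u \oplus \alpha(v) \oplus \alpha(v) = u$, and it is an automorphism of $Q_h$, because $H(u \oplus \alpha(v), u' \oplus \alpha(v)) = H(u,u')$; in particular it preserves Hamming distance. Second, $^v\!\beta(\alpha(v)) = 0^h$, and $\alpha(v)$ is the only string sent to $0^h$.

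Before anything else I would rewrite the hypothesis in terms of the order $\le$. Since $\alpha$ is an isometric embedding with $\alpha(v^0)=0^h$, a vertex $y$ lies on a shortest $v^0,x$-path iff $\alpha(y) \in I_{Q_h}(0^h,\alpha(x)) = \{u : u \le \alpha(x)\}$. Hence the assumption $v \in \cap_{x\in\widehat X} I(v^0,x)$ is equivalent to $\alpha(v) \le \alpha(x)$ for every $x \in \widehat X$. Moreover, since $\alpha$ is proper, Lemma~\ref{induced} lets me identify $\alpha(V(G))$ with the down-closure $\{u : u \le \alpha(x)\text{ for some } x \in \widehat X\}$ of the antichain $\alpha(\widehat X)$.

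For (i), the involution property reduces the claim to the single inclusion $^v\!\beta(\alpha(V(G))) \subseteq \alpha(V(G))$: applying $^v\!\beta$ once more yields the reverse inclusion, so $^v\!\beta$ maps $\alpha(V(G))$ bijectively onto itself. So let $u \in \alpha(V(G))$ and pick $x \in \widehat X$ with $u \le \alpha(x)$. The heart of the proof is the coordinatewise verification that $u \oplus \alpha(v) \le \alpha(x)$: at a coordinate $i$ with $\alpha(v)_i = 0$ we have $(u \oplus \alpha(v))_i = u_i \le \alpha(x)_i$, while at a coordinate with $\alpha(v)_i = 1$ the translated hypothesis forces $\alpha(x)_i = 1$, so $(u \oplus \alpha(v))_i \le 1 = \alpha(x)_i$. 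Thus $u \oplus \alpha(v) \le \alpha(x)$, whence $u \oplus \alpha(v) \in \alpha(V(G))$. This coordinatewise inclusion is the only genuinely substantive step; I expect it to be the main obstacle, although in the end it is short — the work lies in realising that the same maximal element $x$ dominating $u$ also dominates $u\oplus\alpha(v)$.

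Finally, for (ii) I set $\gamma := {}^v\!\beta \circ \alpha$. It is isometric, being a composition of the isometric embedding $\alpha$ with the distance-preserving $^v\!\beta$. By (i) its image $\gamma(V(G)) = {}^v\!\beta(\alpha(V(G)))$ coincides with $\alpha(V(G))$ \emph{as a set}; since $\alpha$ is proper this set is down-closed, so $G \cong Q_h(\gamma(V(G)))$ and $\gamma$ is proper as well, and the posets $(\gamma(V(G)),\le)$ and $(\alpha(V(G)),\le)$ are literally equal, with common maximal elements $\alpha(\widehat X)$. As $\gamma(v) = \alpha(v)\oplus\alpha(v) = 0^h$ with $\alpha(v)$ the unique preimage of $0^h$, the vertex $v$ is minimal with respect to $\gamma$; and the vertices carrying a maximal $\gamma$-label are exactly those $y$ with $^v\!\beta(\alpha(y)) \in \alpha(\widehat X)$, i.e. $Y = \{y : {}^v\!\beta(\alpha(y)) = \alpha(x),\ x \in \widehat X\}$. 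Since $\gamma(Y)=\alpha(\widehat X)$ is an antichain generating the same daisy cube, $Y$ is indeed the maximal vertex set of $G$ with respect to $v$. The one thing to keep in mind is the tacit identification of the vertex $v \in V(G)$ with its label $\alpha(v) \in B^h$ inside the symbol $^v\!\beta$; once that is pinned down, the whole statement rests on the coordinatewise inclusion of paragraph three.
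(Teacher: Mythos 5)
Your proof is correct and follows essentially the same route as the paper's: the coordinatewise inclusion $u\oplus\alpha(v)\le\alpha(x)$ for the same dominating $x\in\widehat X$ is exactly the paper's argument that $S^{u}\subseteq S^{\alpha(x)}$ and $S^{\alpha(v)}\subseteq S^{\alpha(x)}$ imply $S^{^v\!\beta(u)}\subseteq S^{\alpha(x)}$, and the involution property handles bijectivity in both. Your XOR formulation and the explicit remark that $\gamma(V(G))=\alpha(V(G))$ as a down-closed set merely make explicit what the paper leaves implicit; no substantive difference.
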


\begin{proof}
(i) 
 %$Q_h(\alpha(\widehat X))$.
We have to show that if $v \in \cap_{x\in  \widehat X} I(v^0, x)$, then 
for every $u  \in \alpha(V(G)) $) there is exactly one $ ^v\!\beta( u)  \in   \alpha(V(G))$. 
Note that $\alpha^{-1}(u)  \in  I(v^0, x)$ and $v  \in  I(v^0, x)$ for some $x \in  \widehat X$. 
Thus,  $S^{u} \subseteq S^{\alpha(x)}$ and $S^{\alpha(v)} \subseteq S^{\alpha(x)}$. 
It follows  that $S^{^v\!\beta (u)} \subseteq S^{\alpha(x)}$.
Since $\alpha$ is proper, $\alpha(V(G)) = \cup_{x\in \widehat X} I(0^h, \alpha(x))$ by Lemma \ref{induced} and  we obtain  $^v\!\beta (u) \in  V(\alpha( G))$.
%Thus, $^v\!\beta$ is surjective. 

In order to see that $^v\!\beta$ is injective, note that  $^v\!\beta(^v\!\beta(u))=u$ for every 
$u \in   \alpha(V(G))$. Suppose to the contrary that there exist $u,z  \in   \alpha(V(G))$, 
$u \not = z$, such that $^v\!\beta(u) = ^v\!\beta(z)$. It follows that 
$^v\!\beta(^v\!\beta(u)) = ^v\!\!\beta(^v\!\beta(z))$ and thus $u=z$, which yields a contradiction.

(ii) By (i), $^v\!\beta$ maps from $\alpha(V(G))$ to $\alpha(V(G))$. 
Let $x \in \widehat X$ and recall that $^v\!\beta(^v\!\beta(\alpha(x)))=\alpha(x)$. 
Thus,   if $y \in V(G)$ such that $\alpha(y) = \, ^v\!\beta(\alpha(x))$, we have 
 $^v\!\beta (\alpha(y)) = \alpha(x)$.
Moreover,  $^v\beta(v)=0^h$.
It follows that $Y =\{ y \, | \,  ^v\!\beta (\alpha(y)) = \alpha(x); \, x \in \widehat X \}$ is the 
maximal vertex set of $G$ with respect to
$^v\!\beta \circ \alpha$, while  $v$ is the corresponding minimal vertex. 
\end{proof}

%SI56 0317 6700 0217 783

\begin{figure}[!ht] 
	\centering
		\includegraphics[width=13.5cm]{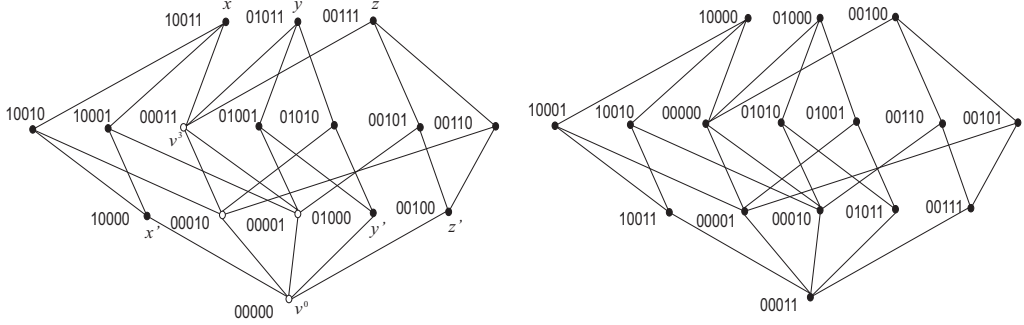}
        \caption{Two proper embeddings of a daisy cube.} 
\label{prva}
\end{figure}

Fig. \ref{prva} shows two proper embeddings of a daisy cube $G$. The embedding on the left hand side, 
say $\alpha$, admits the set of maximal vertices $\widehat X = \{x,y,z\}$  with labels 
$\alpha(x)= 10011$, $\alpha(y)= 01011$ and $\alpha(z)= 00111$. 
 Let $v^0 \in V(G)$ such that $v^0 = \alpha^{-1}(00000)$. Then $I(v^0,x) \cap I(v^0,y) \cap I(v^0,z) = 
 \{v^0, v^1, v^2, v^3 \}$, where %$\alpha(v^1)= 00001$, $\alpha(v^2)= 00010$ and 
 $\alpha(v^3)= 00011$. The embedding on the right  hand side of Fig. \ref{prva} is   
 $^{v^3}\!\beta \circ \alpha$ with the set of  
 maximal vertices $Y = \{x', y', z'\}$,  where the corresponding labels  are
$\alpha(x')= 10000$, $\alpha(y')= 01000$ and $\alpha(z')= 00100$. Note also that 
$^{v^3}\!\beta(\alpha(x'))= 10011$, $^{v^3}\!\beta(\alpha(y'))= 01011$ and 
$^{v^3}\!\beta(\alpha(z'))= 00111$.

If $v$ is a vertex of a partial cube $G$, then $N_G^v(u)$ (or simply $N^v(u)$ ) 
is the set of neighbors of $u$ which are closer to $v$ than $u$, more formally 
$N_G^v(u):= \{ z  \, |  \,  z \in N(u), d(v,z) =  d(v,u) - 1 \}$,

Let $u \in V(G)$ where $G = Q_h(X)$ and let  $X'$ be the maximal subset of $\widehat X$ with 
the property $u \in \cap_{x\in X'} I(0^h, x)$.
Let  $G^u$ be the graph induced by the set $\cup_{x\in X'} I(0^h, x)$, i.e. 
 $G^u = G[\cup_{x\in X'} I(0^h, x)]$. 
 Note that by Lemma \ref{minimal} and Lemma \ref{induced}, 
 $G^u$ is a daisy cube of order $h$ and $u$ is its minimal vertex.
Observe for example the graph 
$Q_4(0111, 1011, 1101, 1110)$ on the right hand side of Fig. \ref{druga}: if $u=1100$, 
then $X' = \{ 1110, 1101 \}$.

As noted in \cite{andrej}, an efficient way of finding a minimal vertex of a daisy cube $G$ 
would give a linear time algorithm for finding a proper embedding of $G$.
It was also shown that  if $G$ is a daisy cube of order $h$, then a minimal vertex 
of $G$ is of degree $h$. It is not difficult to see that a vertex of degree $h$ need not to be a minimal vertex of $G$. Note for example that $Q_h^-$ 
(that is a vertex deleted $Q_h$) admits $2^h-h-1$ vertices of degree $h$ and  exactly one 
minimal vertex (see also Fig. \ref{druga}, where $Q_4^-$ is depicted). 

\begin{prop} \label{x'}
Let $u \in V(G)$, where $G = Q_h(X)$ and $d(u)=h$. 
If $G^u = G[\cup_{x\in X'} I(0^h, x)]$, then every minimal vertex of $G$ belongs to 
 $\cap_{x\in X'} I(0^h, x)$. 
\end{prop}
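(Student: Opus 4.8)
The plan is to reduce the statement to a clean intrinsic characterization of the minimal vertices of $G$: a vertex $m$ is a minimal vertex of $G$ (that is, $\alpha'(m)=0^h$ for some proper embedding $\alpha'$) if and only if $\alpha(m)\le x$ for every $x\in\widehat X$, i.e.\ $m\in\bigcap_{x\in\widehat X}I(0^h,x)$. Once this is in hand the Proposition is immediate: since $X'\subseteq\widehat X$, intersecting over the smaller family enlarges the intersection, so $\bigcap_{x\in\widehat X}I(0^h,x)\subseteq\bigcap_{x\in X'}I(0^h,x)$, and therefore every minimal vertex of $G$ lies in $\bigcap_{x\in X'}I(0^h,x)$. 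Lemma \ref{minimal} already gives the reverse inclusion $\bigcap_{x\in\widehat X}I(0^h,x)\subseteq\{\text{minimal vertices}\}$, so the only direction needing work is that every minimal vertex lies below every maximal vertex of $\alpha$.

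To obtain that direction I would first establish the following description of the maximal cubes of a daisy cube: every inclusion-maximal subcube of $G$ is exactly $I(0^h,x)$ for some $x\in\widehat X$, and in particular contains the minimal vertex $v^0=\alpha^{-1}(0^h)$. Indeed, a maximal cube, being an interval, has a bottom word $b$ and a top word $t$; since $t$ is a vertex of $G$ we have $t\le x$ for some $x\in\widehat X$, and then every word $w$ of the cube satisfies $w\le t\le x$, so the cube is contained in $I(0^h,x)$. By Lemma \ref{induced} the set $I(0^h,x)$ induces a cube of $G$, so maximality forces equality, whence $b=0^h$. Thus each maximal cube of $G$ contains the minimal vertex of the embedding $\alpha$.

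Now let $m$ be an arbitrary minimal vertex of $G$, realised by a proper embedding $\alpha'$ with $\alpha'(m)=0^h$. The collection of maximal cubes is an intrinsic feature of the graph $G$, independent of the chosen embedding, so the description above applies verbatim to $\alpha'$ and shows that $m$ (the $\alpha'$-image of $0^h$) lies in \emph{every} maximal cube of $G$. In particular $m$ lies in each cube $I(0^h,x)$, $x\in\widehat X$, read off from $\alpha$, which means $\alpha(m)\le x$ for all $x\in\widehat X$. This is precisely the missing direction of the characterization, and combined with the trivial inclusion above it finishes the proof.

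I expect the main obstacle to be the maximal-cube lemma, specifically the step pinning the bottom of a maximal cube at $0^h$: one must argue that the containing interval $I(0^h,x)$ really is a subgraph of $G$ (this is where Lemma \ref{induced} enters) and that maximality then yields equality rather than mere containment. By contrast, the hypothesis $d(u)=h$ is not used in the inclusion itself, which holds for any $u$; its role is to guarantee $\bigvee_{x\in X'}x=1^h$, so that $G^u$ is a daisy cube of full order $h$ and $\bigcap_{x\in X'}I(0^h,x)$ is exactly its set of minimal vertices. This is what makes the Proposition algorithmically useful, since it confines every minimal vertex of the full graph $G$ to the minimal vertices of the smaller daisy cube $G^u$.
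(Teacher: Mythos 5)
Your proof is correct, but it takes a different route from the paper's. The paper argues by contradiction and stays entirely inside the distance structure: it first asserts that a minimal vertex $v$ satisfies $d(v,w)\le |S^x|$ for every $x\in\widehat X$ and every $w\in I(0^h,x)$ (this bound is exactly your ``every maximal cube contains every minimal vertex'' fact in disguise, and the paper leaves its justification implicit), and then, assuming $v\notin I(0^h,x)$ for some $x\in X'$, it exhibits an explicit witness --- the word equal to $\bar v$ on $S^x$ and $0$ elsewhere --- which lies in $I(0^h,x)$ but is at distance at least $|S^x|+1$ from $v$. You instead prove the stronger, embedding-independent statement that every minimal vertex lies in $\bigcap_{x\in\widehat X}I(0^h,x)$ (essentially the hard direction of Theorem \ref{min}, which the paper only assembles later from Lemma \ref{minimal} and this proposition) and then deduce the claim from the trivial inclusion $\bigcap_{x\in\widehat X}I(0^h,x)\subseteq\bigcap_{x\in X'}I(0^h,x)$. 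Your route requires you to supply the maximal-cube classification explicitly (every inclusion-maximal induced subcube of $Q_h(X)$ is $I(0^h,x)$ for some $x\in\widehat X$, hence contains $0^h$), together with the standard fact that an induced $Q_k$ in $Q_h$ is a coordinate subcube; the paper's route avoids naming that lemma at the cost of an unproved distance inequality and a less transparent contradiction. Your observation about the role of the hypothesis $d(u)=h$ (irrelevant to the inclusion itself, but needed for the proposition to be algorithmically useful) is also accurate.
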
 

\begin{proof} 
Let $v$ be a minimal vertex of  $G$. Note that for every $x \in \widehat X$ and every 
$u \in I(0^h ,x)$ we have  $d(v,u) \le |S^x|$.
Suppose to the contrary that $v \not \in  \cap_{x\in X'} I(0^h, x)$.   
It follows that there exists $x \in X'$ such that $v \not \in I(0^h, x)$.  
If $u \in I(0^h, x)$, then $S^u \subseteq S^x$. Moreover, since $v \not \in I(0^h, x)$, there
exists an index $j \not \in S^x$ such that $v_j = 1$. It follows that the string $u$ defined by
$$ u_i  =
       \left \{\begin{array}{ll}
            \bar v_i,  \; i \in S^x  \\
             0, \; {\rm otherwise}
           \end{array} \right. $$
is a vertex of $I(0^h, x)$ with $d(v,u) > |S^x|$ and we obtain a contradiction.
 \end{proof}

\begin{theorem} \label{min}
If $G = Q_h(X)$ and $\hat x = \land_{x \in \widehat X} x$, then $v$ is a minimal vertex of $G$ if and only if $v \in \cap_{x\in  \widehat X} I(0^h, x) = I(0^h, \hat x)$.
\end{theorem}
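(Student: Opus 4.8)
The plan is to separate the statement into the set equality $\cap_{x\in\widehat X} I(0^h,x)=I(0^h,\hat x)$ and the characterization of minimal vertices, and to obtain the two implications of the equivalence from results already established above: Lemma~\ref{minimal} for one direction and Proposition~\ref{x'} for the other.

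First I would dispose of the order-theoretic identity. In a hypercube the interval between the bottom vertex $0^h$ and $x$ is exactly the order ideal below $x$, namely $I(0^h,x)=\{u\in B^h : S^u\subseteq S^x\}$: a vertex $u$ lies on a shortest $0^h,x$-path iff $w(u)+H(u,x)=w(x)$, and this holds precisely when $u\le x$. Intersecting over all maximal vertices gives $\cap_{x\in\widehat X} I(0^h,x)=\{u : S^u\subseteq \cap_{x\in\widehat X} S^x\}$, and since $\hat x=\land_{x\in\widehat X} x$ we have $S^{\hat x}=\cap_{x\in\widehat X} S^x$, so the right-hand side is $I(0^h,\hat x)$. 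I would also note that $\hat x\le x$ for every $x\in\widehat X$, so $\hat x\in I(0^h,x)\subseteq V(G)$ and $I(0^h,\hat x)$ is well defined inside $G$; as $G$ is a partial cube and every $u\le x$ already lies in $V(G)$, intervals computed in $G$ and in $Q_h$ agree here, so no distinction is needed.

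For the implication that each vertex of $\cap_{x\in\widehat X} I(0^h,x)$ is minimal, I would invoke Lemma~\ref{minimal} with the identity embedding, taking $\alpha=\mathrm{id}$ and $v^0=0^h$ so that $\widehat X$ is the maximal set of this embedding. The hypothesis $v\in\cap_{x\in\widehat X} I(0^h,x)=\cap_{x\in\widehat X} I(v^0,x)$ is then exactly what the lemma requires, and part (ii) produces the proper embedding ${}^{v}\!\beta\circ\alpha={}^{v}\!\beta$ whose minimal vertex is $v$; hence $v$ is a minimal vertex of $G$. For the converse, that every minimal vertex lies in $\cap_{x\in\widehat X} I(0^h,x)$, I would apply Proposition~\ref{x'} to the single vertex $u=0^h$. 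Since $0^h$ is the minimal vertex of the identity embedding, it has degree $h$, so the hypothesis $d(u)=h$ is met; and because $0^h\in I(0^h,x)$ for every $x\in\widehat X$, the associated maximal subset $X'$ collapses to all of $\widehat X$. Proposition~\ref{x'} then asserts precisely that every minimal vertex belongs to $\cap_{x\in X'} I(0^h,x)=\cap_{x\in\widehat X} I(0^h,x)$, which is the inclusion sought.

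The main point to get right is not any single computation but the recognition that Proposition~\ref{x'}, instantiated at the trivial choice $u=0^h$, already yields the harder forward implication, while Lemma~\ref{minimal} supplies the reverse one. The only things left to verify are elementary: that $0^h$ legitimately satisfies the degree hypothesis of Proposition~\ref{x'} (it does, since a minimal vertex of a daisy cube of order $h$ has degree $h$), and that $X'$ indeed becomes all of $\widehat X$ for this choice of $u$. Once these are checked, the three pieces combine to give the stated equivalence together with the set equality.
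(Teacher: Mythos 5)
Your proposal is correct and follows essentially the same route as the paper: the paper's proof simply cites Lemma~\ref{minimal} and Proposition~\ref{x'} for the two implications of the equivalence and then verifies the set identity $\cap_{x\in\widehat X} I(0^h,x)=I(0^h,\hat x)$ via $S^{\hat x}=\cap_{x\in\widehat X}S^x$, exactly as you do. You merely make explicit the instantiations the paper leaves implicit (the identity embedding for Lemma~\ref{minimal}, and $u=0^h$ with $X'=\widehat X$ for Proposition~\ref{x'}), which is a faithful and slightly more detailed rendering of the same argument.
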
 

\begin{proof}
By Lemma \ref{minimal} and Proposition \ref{x'}, $v$ is a minimal vertex of $G$, 
 if and only if $v \in \cap_{x\in  \widehat X} I(0^h, x)$.
Note that $v \in \cap_{x\in  \widehat X} I(0^h, x)$ if and only if $S^v \subseteq   \cap_{x\in  X} S^x$. 
Since $S^{\hat x} = \cap_{x\in  X} S^x$, for every $v \in V(G)$ we have 
 $v \in \cap_{x\in  \widehat X} I(0^h, x)$  if and only if $v \le  \hat x$.
It follows  that $\cap_{x\in  X} I(0^h, x) = I(0^h, \hat x)$ and the assertion follows.
\end{proof}

¸%Given a vertex $v_0$ of a connected graph $G$, a spanning tree $T$ for which $d_T(v_0, v) = d_G(v_0, v)$ for every vertex $v \in V(G)$, is called a {\em BFS tree}. 

\section{Isometric embedding}

If  $G$ is a graph isomorphic to a hypercube (but without an embedding), then its 
isometric embedding is easy to obtain as shown in the next result.  
%(see for example \cite{KlIm}).
%\begin{prop} 
%Let  $G$ be a graph isomorphic to a $h$-cube, $v$ an arbitrary vertex of $G$ and
%$\alpha : V(G) \rightarrow V(Q_h)$ a function such that $\alpha(v)  =  0^d$, 
%the vertices of $N(v)$ obtain pairwise different labels of the form $0^{i-1}10^{h-i}$, $i\in [h]$,
%while for any other vertex $u \in V(G)$, the $i$-th coordinate of $\alpha(u)$ is equal to 1 if and only if a shortest $v,u$-path contains the vertex labeled  $0^{i-1}10^{h-i}$.
%Then $\alpha$ is  an isometric embedding of $G$ into $Q_h$. 
%Moreover, when a fixed embedding of $v$ and $N(v)$is chosen, $\alpha$ is unique. 
%\end{prop} 
%, otherwise the $i$-th coordinate of $\alpha(u)$ is set to 0. 

\begin{prop} \label{cubes}
Let  $G$ be a graph isomorphic to a $h$-cube, $v$ an arbitrary vertex of $G$ and
 $\alpha : V(G) \rightarrow V(Q_h)$   a function such that
$\alpha(v)  =  0^d$, 
the vertices of $N(v)$ obtain pairwise different labels of the form $0^{i-1}10^{h-i}$, $i\in [h]$,
while for any other vertex $u \in V(G)$ we set $\alpha(u) = 
 \lor_{z \in N^v(u)} \alpha(z)$. Then $\alpha$ is  an isometric embedding of $G$ into $Q_h$. 
 Moreover, when a labeling of vertices in $N[v]$ is chosen, $\alpha$ is unique.
\end{prop}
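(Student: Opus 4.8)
The plan is to show that $\alpha$ coincides with an honest graph isomorphism $\gamma : G \to Q_h$ that agrees with $\alpha$ on $N[v]$. Since any isomorphism preserves distances, this makes $\alpha$ an isometric embedding for free, and the deterministic nature of the OR-rule will force uniqueness. First I would dispose of well-definedness: processing the vertices in nondecreasing order of $d(v,\cdot)$, every $z \in N^v(u)$ satisfies $d(v,z) = d(v,u)-1$ and so has already been labelled; hence $\alpha(u) = \lor_{z \in N^v(u)} \alpha(z)$ is assigned unambiguously, and by induction on $d(v,\cdot)$ the map $\alpha$ is defined on all of $V(G)$. Note also that since $G \cong Q_h$ each vertex has degree $h$, so the $h$ pairwise different labels of the form $0^{i-1}10^{h-i}$ exhaust all weight-one words; write $e_i = 0^{i-1}10^{h-i}$, and let $c : N(v) \to \{e_1,\dots,e_h\}$ denote the chosen base labelling.

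Next I would construct $\gamma$. Fixing any isomorphism $f : G \to Q_h$, I compose with the translation automorphism $w \mapsto w \oplus f(v)$ (which preserves Hamming distance and sends $f(v)$ to $0^h$) to obtain an isomorphism carrying $v$ to $0^h$ and $N(v)$ onto $\{e_1,\dots,e_h\}$; composing further with the coordinate-permutation automorphism of $Q_h$ induced by the permutation $c$ matches the chosen labelling on the neighbours. The result is an isomorphism $\gamma : G \to Q_h$ with $\gamma(v)=0^h$ and $\gamma|_{N(v)} = c$, built only from automorphisms that visibly preserve distance.

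The combinatorial core is to verify that $\gamma$ itself obeys the OR-rule at every $u$ with $d(v,u) \ge 2$. Because $\gamma$ is an isometry we have $w(\gamma(z)) = d(v,z)$, so $\gamma$ maps $N^v(u)$ bijectively onto $N^{0^h}(\gamma(u))$, the down-neighbours of $\gamma(u)$ in $Q_h$. It then suffices to check the identity $w = \lor_{z \in N^{0^h}(w)} z$ for every $w \in V(Q_h)$ with $w(w) \ge 2$: a coordinate $j$ with $w_j=1$ stays $1$ in the down-neighbour obtained by flipping some \emph{other} $1$-bit (one exists precisely because $w(w)\ge 2$), while a coordinate with $w_j = 0$ remains $0$ in every down-neighbour. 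I would flag here that this identity fails exactly for the weight-one words, which is the reason $N(v)$ must be labelled by hand rather than by the rule.

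Finally I would conclude by induction on $d(v,u)$ that $\alpha = \gamma$: the two maps agree on $N[v]$ by construction, and if they agree on every vertex strictly closer to $v$ than $u$, then since $N^v(u)$ lies at distance $d(v,u)-1$ the common OR-rule yields $\alpha(u) = \lor_{z\in N^v(u)} \alpha(z) = \lor_{z\in N^v(u)} \gamma(z) = \gamma(u)$. Hence $\alpha=\gamma$ is an isomorphism and in particular an isometric embedding, and uniqueness given the labels on $N[v]$ is immediate from the determinism of the recursion. The main obstacle is the third paragraph: establishing that $\gamma$ can be normalized to match the base labels and, above all, that it satisfies the OR-rule, where the weight-$\ge 2$ hypothesis is the essential subtlety.
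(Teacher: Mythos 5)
Your proposal is correct and follows essentially the same route as the paper: normalize an isomorphism $G \to Q_h$ via the transitivity (automorphisms) of the hypercube so that it sends $v$ to $0^h$ and matches the chosen labels on $N(v)$, then observe that any word of weight at least $2$ equals the bitwise OR of its down-neighbours, so the recursion reproduces that isomorphism and is deterministic. The paper's proof is considerably more terse (it merely invokes vertex-transitivity and asserts the structure of $N^v(u)$), so your version supplies details --- notably the explicit verification of the OR identity and its failure at weight one --- that the paper leaves implicit.
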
 
\begin{proof} 
Since a hypercube is vertex-transitive, we may choose an arbitrary vertex $v$ of $G$ 
and set $\alpha(v)=0^h$. 
Moreover, for every $u \in V(G)$ with $d(v,u) = i$, $i \ge 1$, we must have 
$N^v(u) = \{ z \; | \; \overline{ \alpha_{(i)}(z) }=  \alpha_{(i)}(u) = 1$ 
for exactly one $i \in [h]$ and 
$\alpha_{(j)}(z) =  \alpha_{(j)}(u)$ for every $j \in [h] \setminus \{ i \} \}$. Thus, 
$\alpha(u) =  \lor_{z \in N^v(u)} \alpha(z)$. Thus, for chosen labeling of  
vertices in $N[v]$,  $\alpha$ is unique.
\end{proof} 

%\begin{lemma} 
%Let  $G$ be a graph isomorphic to a $d$-cube, $v$ an arbitrary vertex of $G$ and
% $\alpha$  the function $\alpha : V(G) \rightarrow V(Q_h)$ defined with 
%$$\alpha(u)  =
   %     \left \{\begin{array}{ll}
      %        0^d,  \; u = v \\
         %     0^i10^{d-i},  \; u \in N(v), \, {\rm where} \, i\in [d] {\rm \, defines \, an \, arbitrary \, ordering \, of \,} N(v)
  %\\
     %         \biglor_{z \in N_v(u)} \alpha(z), \; {\rm otherwise}
        %     \end{array} \right. $$
%It follows that $\alpha$   in an isometric embedding of $G$ into $Q_d$.
%\end{lemma}

%\begin{proof} 
%The proof is by induction on $d(v,u)$. If $d(v,u) \le 1$, the lemma clearly holds. Let 
%$d(v,u) = t > 1$ and assume the lemma holds for all vertices $z$ with  $d(v,z) < t$. 
%Note that the $i$-th coordinate of $\alpha(u)$  is equal to 1 if and only if there exists 
%$z \in N^v(u)$ such that the $i$-th coordinate of $\alpha(z)$  is equal to 1.
%Since  a shortest $v,u$-path contains $z$ and, 
%by induction hypothesis,  a shortest $v,z$-path contains  $0^{i-1}10^{h-i}$, it follows 
%that a shortest $v,u$-path contains $0^{i-1}10^{h-i}$. 
%By Proposition \ref{cubes}, this assertion concludes the proof.
%\end{proof} 

\begin{lemma} \label{partial}
Let  $G$ be partial cube of isometric dimension $h$, $u$ a vertex of degree $h$ in $G$ and
let for every $v \in V(G) \setminus N[u]$ it holds that $|N^u(v)|\ge 2$. Then we may define  
a function $\alpha : V(G) \rightarrow V(Q_h)$ such that  
$\alpha(u)  =  0^h$, 
the vertices of $N(u)$ obtain pairwise different labels of the form $0^{i-1}10^{h-i}$, $i\in [h]$,
while for any other vertex $v \in V(G)$ we set $\alpha(v) = 
 \lor_{z \in N^u(v)} \alpha(z)$.
 Moreover, 
 
 (i) $\alpha$ is  an isometric embedding of $G$ into $Q_h$,
 
 (ii) when a fixed embedding of vertices in $N[v]$ is chosen, $\alpha$ is unique. 
\end{lemma}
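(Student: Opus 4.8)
The plan is to compare the recursively defined map $\alpha$ with the canonical Djoković–Winkler labelling of the partial cube $G$. Recall the standard facts about a partial cube of isometric dimension $h$: it has exactly $h$ $\Theta$-classes $F_1,\dots,F_h$, the edges incident to any fixed vertex lie in pairwise distinct $\Theta$-classes, and the isometric embedding into $Q_h$ sending a prescribed base vertex to $0^h$ is unique up to a permutation of its coordinates. Since $u$ has degree $h$, equal to the number of $\Theta$-classes, each class $F_i$ contributes exactly one edge at $u$, so the neighbours of $u$ are in bijection with the $h$ coordinates; this is also what makes $0^{i-1}10^{h-i}$, $i\in[h]$, a legitimate and complete set of labels on $N(u)$. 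Fixing the labelling of $N[u]$ (as in the hypothesis) therefore fixes that bijection and determines a unique canonical embedding $\gamma:V(G)\to V(Q_h)$ with $\gamma(u)=0^h$ agreeing with the prescribed labels on $N(u)$. I would organise the whole argument around proving $\alpha=\gamma$.

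First I would dispose of well-definedness and statement (ii). Processing the vertices in order of increasing distance from $u$, every $v\notin N[u]$ has $N^u(v)$ contained in the strictly smaller distance layer, so $\alpha(v)=\lor_{z\in N^u(v)}\alpha(z)$ is a legitimate recursion whose value is completely forced once the labels on $N[u]$ are chosen; this already yields the uniqueness claim (ii). The substance is in (i). For the canonical labelling I would invoke two further standard facts: $w(\gamma(v))=d(u,v)$, and for every edge $zv$ with $d(u,z)=d(u,v)-1$ the labels $\gamma(z)$ and $\gamma(v)$ differ in exactly one coordinate, namely the one indexing the $\Theta$-class of $zv$, where $\gamma(z)$ carries a $0$ and $\gamma(v)$ a $1$. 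In particular $\gamma(z)\le\gamma(v)$ and $\gamma(z)$ is obtained from $\gamma(v)$ by resetting a single bit to $0$.

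The core is then an induction on $d(u,v)$ showing $\alpha(v)=\gamma(v)$. The cases $d(u,v)\in\{0,1\}$ hold by the choice of $\gamma$ and the prescribed labels. For $d(u,v)=d\ge 2$, every $z\in N^u(v)$ lies in layer $d-1$, so $\alpha(z)=\gamma(z)$ by the inductive hypothesis, and it remains to check $\lor_{z\in N^u(v)}\gamma(z)=\gamma(v)$. Each $\gamma(z)\le\gamma(v)$ gives $\lor_z\gamma(z)\le\gamma(v)$. For the reverse inequality I would fix a coordinate $k$ with $\gamma_{(k)}(v)=1$: the down-neighbours reset pairwise distinct bits $j(z)$ (distinct because the edges $zv$ lie in distinct $\Theta$-classes at $v$), so among the at least two such $z$ at least one satisfies $j(z)\ne k$, and for this $z$ we get $\gamma_{(k)}(z)=\gamma_{(k)}(v)=1$. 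Hence $\lor_z\gamma(z)$ carries a $1$ in coordinate $k$, so $\lor_z\gamma(z)=\gamma(v)$. This closes the induction, gives $\alpha=\gamma$, and thus establishes (i).

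The hypothesis $|N^u(v)|\ge 2$ for $v\notin N[u]$ is exactly where the argument could break, and is the point I would treat most carefully: if some $v$ had a single down-neighbour $z$, then the OR would return $\gamma(z)$, which is $\gamma(v)$ with one bit erased, so $\alpha(v)\ne\gamma(v)$ and $\alpha$ would fail to be isometric. The remaining things to nail down are the two invoked structural properties of $\Theta$-classes (distinctness of the classes meeting a common vertex, and the single-bit change across a down-edge) and the bijection between $N(u)$ and the coordinates forced by $u$ having degree $h$; everything else in the induction is bookkeeping on the bitwise $\lor$.
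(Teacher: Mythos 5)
Your proof is correct and follows essentially the same strategy as the paper: the paper also compares the recursively defined $\alpha$ with a reference isometric embedding (obtained by viewing $G$ as an isometric subgraph of an unlabeled $h$-cube $H$ and applying Proposition~\ref{cubes} to $H$) and then invokes $|N^u(v)|\ge 2$ to conclude the two labellings agree. Your $\Theta$-class formulation supplies explicitly the one step the paper leaves implicit, namely that the down-neighbours of $v$ reset pairwise distinct bits of $\gamma(v)$, so the bitwise OR of any two or more of their labels already recovers $\gamma(v)$.
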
 

\begin{proof} 
 Since $G$ is a partial cube of dimension $h$, we may assume that $G$ is an isometric subgraph of
  an (unlabeled) $h$-cube $H$. %, i.e. $V(G) \subseteq V(H)$ and $E(G) \subseteq E(H)$. 
 Let  $\beta$ be a embedding of $H$ with respect to $v$ as defined in Proposition 
 \ref{cubes} and let 
 $\alpha$ be an embedding of $G$ such that  for every $z \in N[u]$ we set $\alpha(z)=\beta(z)$.
 Since $|N_G^u(v)|\ge 2$ and $N_G^u(v) \subseteq N_H^u(v)$ for 
 every $v \in V(G) \setminus N[u]$, it follows that 
 $\alpha(v)=\beta(v)$ for every vertex $v \in V(G)$.  By Proposition \ref{cubes}, 
 $\beta$ is  an isometric embedding of $H$ into $Q_h$. Thus, 
  $\alpha$ is an isometric embedding of $H$ into $Q_h$. Moreover, 
  by Proposition \ref{cubes}, $\alpha$ is unique for a fixed embedding of vertices in $N[v]$. 
\end{proof} 

\begin{corollary} \label{posledica}
Let  $G$ be a graph isomorphic to a daisy cube of order $h$. 
If $v$ is a minimal vertex of $G$ and $\alpha$ an isometric embedding with $\alpha(v)=0^h$,
then  $\alpha$ is proper.
\end{corollary}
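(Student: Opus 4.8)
The plan is to reduce everything to the uniqueness statement of Lemma \ref{partial}. I would first record the two structural facts about a minimal vertex $v$ needed to invoke Lemma \ref{partial} with its distinguished vertex taken to be $v$. The first is that $\deg(v)=h$, which is the cited property of minimal vertices. The second is that every $u\in V(G)\setminus N[v]$ satisfies $|N^v(u)|\ge 2$. To establish the latter I would pass to a proper embedding $\alpha_0$ with $\alpha_0(v)=0^h$; such an embedding exists precisely because $v$ is minimal, by applying Lemma \ref{minimal}(ii) to an arbitrary proper embedding of $G$ and using Theorem \ref{min} to place $v$ in the relevant intersection of intervals. Under $\alpha_0$ the distance $d(v,u)$ equals the weight $w(\alpha_0(u))$, and the neighbours of $u$ closer to $v$ are obtained by switching a single $1$ of $\alpha_0(u)$ to $0$; since a daisy cube is closed downward under $\le$, all $w(\alpha_0(u))$ such vertices belong to $G$, so $|N^v(u)|=d(v,u)\ge 2$ whenever $u\notin N[v]$.

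With these hypotheses in hand, Lemma \ref{partial} applies to $v$ and guarantees that, once labels are assigned to the vertices of $N[v]$, the isometric embedding sending $v$ to $0^h$ is forced. Because $\deg(v)=h$ and each neighbour of $v$ lies at distance $1$ from $0^h$, the labels on $N(v)$ are exactly the $h$ unit strings $0^{i-1}10^{h-i}$, and any two admissible assignments differ only by which neighbour receives which unit string, i.e.\ by a permutation of coordinates. Hence every isometric embedding $\alpha$ with $\alpha(v)=0^h$ is equivalent to the proper embedding $\alpha_0$ constructed above.

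It then remains to note that properness is invariant under permutations of coordinates. If $\pi$ is such a permutation, then $\pi$ is an automorphism of $Q_h$ that preserves the partial order $\le$, so $Q_h(\pi(Y))=\pi(Q_h(Y))\cong Q_h(Y)$ for every $Y\subseteq B^h$; taking $Y=\alpha_0(V(G))$ shows that $\pi\circ\alpha_0$ is proper whenever $\alpha_0$ is. As $\alpha$ arises from $\alpha_0$ by such a permutation and $\alpha_0$ is proper, $\alpha$ is proper as well.

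I expect the only delicate point to be the passage from ``unique for a fixed labelling of $N[v]$'' to ``unique up to a permutation of coordinates'', which hinges on the recursive rule $\alpha(u)=\lor_{z\in N^v(u)}\alpha(z)$ being genuinely forced at every vertex by any isometric embedding with $\alpha(v)=0^h$. This is exactly where $|N^v(u)|\ge 2$ enters: two neighbours flipping distinct coordinates already force the bitwise OR of their labels to reproduce $\alpha(u)$, so no ambiguity in the labelling of $N[v]$ can propagate further into the graph.
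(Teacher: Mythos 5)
Your proof is correct and follows essentially the same route as the paper: both arguments produce a proper embedding $\alpha_0$ with $\alpha_0(v)=0^h$ from the minimality of $v$, and then invoke the uniqueness clause of Lemma \ref{partial} (after matching labels on $N[v]$ up to a coordinate permutation, under which properness is invariant) to force $\alpha=\alpha_0$. The only difference is that you explicitly verify the hypotheses of Lemma \ref{partial} ($\deg(v)=h$ and $|N^v(u)|\ge 2$ via downward closedness) and the permutation-invariance of properness, both of which the paper leaves implicit.
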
 

\begin{proof}
Since $v$ is a minimal vertex of $G$, there exist a proper embedding, say 
$\beta$, such that $\beta(v)=0^h$.  We may also assume w.l.o.g. that for every $u \in N(v)$ we have  $\beta(u) = \alpha(u)$. From Lemma \ref{partial} then it follows
that $\beta(u) = \alpha(u)$  for every $v \in V(G)$.
\end{proof}

\noindent
{\bf Remark}\\
%However, 
If $G$ is isomorphic to a daisy cube and
$\alpha$ a proper embedding of $G$, then
different selections of labels for vertices of $N(u)$ yield different but 
equivalent proper embeddings. 

If $G$ is a partial cube and $\alpha $ its isometric embedding to $Q_h$,
let $W_i(G)$  denote the set of vertices of $G$ with weight  $i$, i.e. 
 $W_i(G) = \{ v \, | \, w(\alpha(v))= i \}$.
 
We will also need the following result.

\begin{prop} \label{kocka}
If $G$ is a partial cube, $\alpha $ its isometric embedding to $Q_h$ and 
$v \in V(G)$ such that $w(\alpha(v))=i$, then $|N(v) \cap W_{i-1}(G)| \le i$.
\end{prop}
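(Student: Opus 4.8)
The plan is to exhibit an injection from $N(v) \cap W_{i-1}(G)$ into the set $S^{\alpha(v)}$ of coordinates where $\alpha(v)$ equals $1$, and then conclude by comparing cardinalities, since $|S^{\alpha(v)}| = w(\alpha(v)) = i$.

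First I would fix $v$ with $w(\alpha(v)) = i$ and record the basic consequence of $\alpha$ being an isometric embedding: for any $u \in N(v)$ we have $d_{Q_h}(\alpha(u),\alpha(v)) = d_G(u,v) = 1$, so the labels $\alpha(u)$ and $\alpha(v)$ differ in exactly one coordinate, which I denote by $j(u)$. Next I would restrict to $u \in N(v) \cap W_{i-1}(G)$. Since $\alpha(u)$ is obtained from $\alpha(v)$ by flipping the single bit in position $j(u)$, and this flip decreases the weight by one (from $i$ to $i-1$), the flipped coordinate must be one carrying a $1$ in $\alpha(v)$; that is, $j(u) \in S^{\alpha(v)}$. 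This yields a well-defined map $u \mapsto j(u)$ from $N(v) \cap W_{i-1}(G)$ to $S^{\alpha(v)}$.

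The key step is injectivity of this map, which rests on $\alpha$ being injective, as every isometric embedding is. If two such neighbors $u, u'$ satisfied $j(u) = j(u') = j$, then both $\alpha(u)$ and $\alpha(u')$ would equal the label obtained from $\alpha(v)$ by flipping bit $j$, forcing $\alpha(u) = \alpha(u')$ and hence $u = u'$. Consequently the map is injective and $|N(v) \cap W_{i-1}(G)| \le |S^{\alpha(v)}| = i$, as claimed.

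I do not expect a genuine obstacle here; the argument is a short counting argument. The only point requiring a moment of care is verifying that a weight drop of exactly one forces $j(u)$ to be a $1$-coordinate of $\alpha(v)$ rather than a $0$-coordinate, but this is immediate from the weight comparison $w(\alpha(u)) = w(\alpha(v)) - 1$ together with the fact that $\alpha(u)$ and $\alpha(v)$ differ in precisely one place.
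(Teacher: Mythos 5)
Your proof is correct and is essentially the paper's argument made explicit: the paper simply observes that $\alpha$ maps $N_G(v)$ injectively into $N_{Q_h}(\alpha(v))$ and that $\alpha(v)$ has exactly $i$ neighbors of weight $i-1$ in $Q_h$, which is precisely your coordinate-flipping injection into $S^{\alpha(v)}$. No gap; the two arguments coincide.
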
 
 
\begin{proof}
Since $\alpha $ is isometric embedding of $G$ to $Q_h$, for every 
$v \in V(G)$ with $w(\alpha(v))=i$, we have $N_G(v) \subseteq N_{Q_h}(v)$.
Moreover, $|N(v) \cap W_{i-1}(Q_h)| = i$ and therefore $|N(v) \cap W_{i-1}(G)| \le i$.
\end{proof} 

\begin{prop} \label{edge}
Let  $G = Q_h(X)$, $x,y \in \widehat X$ and $x\not = y$. 
If $u \in I(0^h,x)$ and $v \in I(0^n,y)$ such that  $u,v \not \in I(0^n, x) \cap I(0^h,y)$  then 
$uv \not \in E(G)$.
 \end{prop}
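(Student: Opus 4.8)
The plan is to work entirely with the set-of-ones description of intervals. Inside the ambient hypercube $Q_h$, a vertex $w$ lies on a shortest $0^h$--$z$ path exactly when $w \le z$, so $w \in I(0^h,z)$ if and only if $S^w \subseteq S^z$; consequently $I(0^h,x)\cap I(0^h,y)=\{w : S^w \subseteq S^x \cap S^y\}$. First I would record this translation and note that, by Lemma \ref{induced}, both $u$ and $v$ are indeed vertices of $G$.

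Next I would extract a witness coordinate from each of the two ``not in the intersection'' hypotheses. Since $u \in I(0^h,x)$ gives $S^u \subseteq S^x$, the assumption $u \notin I(0^h,x)\cap I(0^h,y)$ forces $S^u \not\subseteq S^y$, so there is an index $i$ with $u_i=1$ and $y_i=0$. Symmetrically, from $v \in I(0^h,y)$ and $v \notin I(0^h,x)\cap I(0^h,y)$ there is an index $j$ with $v_j=1$ and $x_j=0$.

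Then I would argue by contradiction: assume $uv \in E(G)$, so $H(u,v)=1$ and $u,v$ differ in a single coordinate $k$. If $u_k=1$ and $v_k=0$, then $j \ne k$ (otherwise $v_k=1$), so $u$ and $v$ agree at $j$, giving $u_j=1$ and hence $j \in S^u \subseteq S^x$, i.e.\ $x_j=1$, contradicting $x_j=0$. If instead $u_k=0$ and $v_k=1$, the symmetric argument with the witness $i$ gives $v_i=u_i=1$, whence $i \in S^v \subseteq S^y$ and $y_i=1$, contradicting $y_i=0$. Either way we reach a contradiction, so $uv \notin E(G)$.

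The only point requiring care is the observation that the single differing coordinate $k$ cannot play the role of the relevant witness: in the first case the witness $j$ satisfies $v_j=1$ while $v_k=0$, forcing $j\ne k$ and letting me transport the value of $u$ across the coordinate where $u$ and $v$ agree; this is precisely the mechanism that manufactures the contradiction, and everything else is routine bookkeeping with the containments $S^u\subseteq S^x$ and $S^v\subseteq S^y$. (Note that the hypothesis $x\ne y$ is used only implicitly, to guarantee that the two witnesses exist.)
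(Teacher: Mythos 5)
Your proof is correct. It follows the same overall strategy as the paper -- assume $uv\in E(G)$ and derive a contradiction by examining coordinates -- but it sources the two witness indices differently and, in doing so, is actually tighter. The paper obtains its indices $i,j$ from the incomparability of $x$ and $y$ in the antichain $\widehat X$ (there must be positions with $x_i=1,y_i=0$ and $x_j=0,y_j=1$) and then asserts, without further justification, that w.l.o.g.\ $u_k=v_k$ for all $k\neq i,j$; this silently assumes the single coordinate in which $u$ and $v$ differ lies in $\{i,j\}$. You instead extract the witnesses directly from the hypotheses: $S^u\subseteq S^x$ together with $u\notin I(0^h,x)\cap I(0^h,y)$ forces $S^u\not\subseteq S^y$, giving $i$ with $u_i=1$, $y_i=0$ (and symmetrically $j$ with $v_j=1$, $x_j=0$), and then you run an explicit case analysis on the direction of the flipped bit $k$, checking in each case that the relevant witness coordinate cannot equal $k$. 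This makes the argument self-contained and closes the gap left by the paper's ``w.l.o.g.'' One tiny quibble: your closing remark that $x\neq y$ is needed ``to guarantee that the two witnesses exist'' is not quite right -- the witnesses come from the non-membership hypotheses (which are vacuous when $x=y$, since then $I(0^h,x)\cap I(0^h,y)=I(0^h,x)$); this does not affect the validity of the proof.
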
 

\begin{proof} 
Suppose to the contrary that there exist $u \in I(0^h,x)$ and $v \in I(0^h,y)$ such that  $u,v \not \in I(0^h,x) \cap I(0^h,y)$  and $d(u,v) =1$. 
Since $\widehat X$ is maximal, there exist at least two indices $i,j \in [h]$, such that $x_i \not = y_i$ and  $x_j \not =  y_j$ (otherwise we have either $x \le y$ or $y \le x$). 
Suppose w.l.o.g. $x_i=1$, $y_j=1$ and  $u_k=v_k$ for every $k\not = i,j$. 
If $u_i=0$ (resp. $v_j=0$), then $u  \in I(0^h,y)$ (resp. $v  \in I(0^h,x)$). 
It follows that  $u_i=v_j=1$.  But then $u=v$ and we obtain a contradiction.
\end{proof}

\begin{prop} \label{neighbor}
Let  $G = Q_h( X)$. If $u\in V(G)$ and $d(u)=h$,   then  $N(u) \subseteq G^u$.
 \end{prop}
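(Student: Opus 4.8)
The plan is to reduce the statement to the partial order on $B^h$ and then split the neighborhood $N(u)$ according to whether a neighbor lies below or above $u$. First I would unwind the definition of $X'$: it is the set of all maximal vertices above $u$, that is $X'=\{x\in\widehat{X}\,:\,u\le x\}$, and it is nonempty because $u\in V(G)=V(Q_h(X))$ forces $u\le x$ for some $x\in\widehat{X}$ (Lemma \ref{induced}). Since $I(0^h,x)=\{w\,:\,w\le x\}$, a vertex $w$ belongs to $G^u=G[\cup_{x\in X'}I(0^h,x)]$ exactly when $w\le x$ for some $x\in X'$; this is the membership test I would use throughout.

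Next I would take an arbitrary $w\in N(u)$. Because $w$ and $u$ differ in a single coordinate, either $w\le u$ or $u\le w$, and I would handle the two cases separately. In the first case, $w\le u$, I simply pick any $x\in X'$ (available by the first step) and use transitivity: $w\le u\le x$, so $w\in I(0^h,x)\subseteq G^u$. In the second case, $u\le w$, I use that $w\in V(G)$ to obtain a maximal vertex $x\in\widehat{X}$ with $w\le x$; then $u\le w\le x$ yields $u\le x$, so $x\in X'$, and $w\le x$ again puts $w$ into $G^u$.

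Combining the two cases gives $N(u)\subseteq G^u$. The only place that calls for a little care is the upward case, where one must notice that the witness $x$ provided by $w\in V(G)$ automatically lands in $X'$; this is precisely where the relation $u\le w$ is used, and it is the crux of the argument. I do not anticipate a genuine obstacle: the reasoning establishes the inclusion for each neighbor individually, so the hypothesis $d(u)=h$ (which only guarantees that all $h$ Hamming-neighbors of $u$ are actually present in $G$) is not needed for the containment and is relevant mainly to the setting in which the proposition will be invoked.
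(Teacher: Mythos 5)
Your proof is correct, and it takes a genuinely different route from the paper. The paper argues by contradiction: if some $v\in N(u)$ lay outside $G^u$, it would belong to $I(0^h,y)$ for some $y\in\widehat X\setminus X'$, and since neither $u$ nor $v$ lies in $I(0^h,x)\cap I(0^h,y)$ for the relevant $x$, Proposition~\ref{edge} (no edges between the non-shared parts of two maximal intervals) forbids the edge $uv$. You instead give a direct order-theoretic argument: a Hamming neighbor $w$ of $u$ satisfies $w\le u$ or $u\le w$; downward neighbors land in $G^u$ by transitivity through any $x\in X'$ (nonempty since $u\in V(G)$), and for an upward neighbor the maximal vertex $x$ witnessing $w\le x$ satisfies $u\le w\le x$ and hence already belongs to $X'$. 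Your version is self-contained and more elementary, avoiding Proposition~\ref{edge} entirely, and it correctly exposes that the hypothesis $d(u)=h$ is never used --- the containment $N(u)\subseteq G^u$ holds for every vertex $u$ of a daisy cube, the degree condition being relevant only to the context in which the proposition is later applied (e.g.\ Proposition~\ref{degree}). What the paper's route buys is brevity given machinery it needs anyway: Proposition~\ref{edge} is established independently, so the contradiction is immediate; but it hides the fact that the statement is really just a two-case consequence of the partial order. Your observation about the superfluous hypothesis is worth recording explicitly.
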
 

\begin{proof} 
Let $G^u = G[\cup_{x\in X'} I(0^h, x)]$.
Suppose to the contrary that there exists $v \in N(u)$ such that
 $v \not \in G[\cup_{x\in X'} I(0^h, x)]$. It follows that there exists $y \in \widehat X - X'$ such 
 that $v \in I(0^h,y)$. Since $u \in I(0^h,x)$ for some $x \in \widehat X$ and $x \not = y$, Proposition  \ref{edge} yields a contradiction.
\end{proof} 

\begin{prop} \label{degree}
Let $G =Q_h(X)$ and $u \in V(G)$.
If   $d(u)=h$ and  $G^u = G[\cup_{x\in X'} I(0^h, x)]$,   then $|\cup_{x\in X'} S^x|=h$.
 \end{prop}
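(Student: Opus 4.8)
The plan is to prove the stronger statement $\cup_{x\in X'} S^x = [h]$, from which the cardinality claim is immediate. Concretely, I would show that every single coordinate $i \in [h]$ is captured by some maximal vertex in $X'$. First I would record two easy preliminary facts. Since $G = Q_h(X)$ is an induced subgraph of $Q_h$, the degree of $u$ cannot exceed $h$, and the hypothesis $d(u)=h$ therefore forces all $h$ coordinate-flip neighbours of $u$ in $Q_h$ to actually lie in $G$; write $w^{(i)}$ for the neighbour of $u$ differing from it exactly in coordinate $i$. Second, $X'$ is nonempty: since $u \in V(Q_h(X))$ we have $u \le x$ for some $x \in \widehat X$, and by the definition of $X'$ this $x$ lies in $X'$. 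In particular $S^u \subseteq S^x$ for every $x \in X'$.

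Next I would fix an arbitrary $i \in [h]$ and split on the value of $u_i$. If $u_i = 1$, then $i \in S^u \subseteq S^x$ for every $x \in X'$, so $i \in \cup_{x\in X'} S^x$ trivially. The interesting case is $u_i = 0$. Here $w^{(i)}$ is obtained from $u$ by setting coordinate $i$ to $1$, so $S^{w^{(i)}} = S^u \cup \{i\}$. The key step is to invoke Proposition \ref{neighbor}: because $d(u)=h$, we have $N(u) \subseteq G^u = G[\cup_{x\in X'} I(0^h, x)]$, so $w^{(i)}$ lies in $I(0^h, x)$ for some $x \in X'$. By the standard interval characterisation (used already in the proofs of Proposition \ref{x'} and Theorem \ref{min}), $w^{(i)} \in I(0^h, x)$ means $S^{w^{(i)}} \subseteq S^x$, and in particular $i \in S^x$ with $x \in X'$.

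Combining the two cases, every $i \in [h]$ belongs to $\cup_{x\in X'} S^x$, whence $\cup_{x\in X'} S^x = [h]$ and $|\cup_{x\in X'} S^x| = h$, as required.

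I expect the main obstacle to be exactly the case $u_i = 0$. A priori the neighbour $w^{(i)}$ only needs to sit inside $I(0^h, y)$ for some $y \in \widehat X$, and such a $y$ need not belong to $X'$; were that to happen, the coordinate $i$ would fail to be captured by the union over $X'$. Proposition \ref{neighbor} is precisely what rules this out, confining all of $N(u)$ to $G^u$ and thereby forcing the covering maximal vertex to lie in $X'$ rather than in $\widehat X \setminus X'$. Once this confinement is in hand, the remainder is routine bookkeeping of which coordinate is supplied by which maximal vertex.
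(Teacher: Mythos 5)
Your proof is correct and follows essentially the same route as the paper: both arguments rest on the fact that $d(u)=h$ forces every coordinate-flip neighbour of $u$ to lie in $G$, and both use Proposition \ref{neighbor} to confine those neighbours to $G^u$, so that each coordinate is witnessed by some $x \in X'$. The paper merely phrases this as a proof by contradiction (a missing coordinate $j$ would produce a neighbour of $u$ outside $G^u$), whereas you argue the contrapositive directly and prove the slightly sharper statement $\cup_{x\in X'} S^x = [h]$.
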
 

\begin{proof} 
Suppose $|\cup_{x\in X'} S^x|<h$. It follows that there exist $j\in [h]$ such that for all 
$v\in \cup_{x\in X'} I(0^h, x)$ we have $v_j=0$. 
%It follows that $v_j=0$. 
Since $d(u)=h$ , there exists $z\in N(u)$ such that $z_j=1$. 
It follows that $z \not \in \cup_{x\in X'} I(0^h, x)$. Thus, there exists $y \in \widehat X - X'$ such that $v \in I(0^h, y)$. Proposition  \ref{neighbor} yields a contradiction.
\end{proof} 

%\begin{corollary} \label{nic}
%Let $G \cong Q_d(\widehat X)$ and $u \in V(G)$ with $d(u)=d$. Then    $|X'|=1$ if and only    $G=\cong Q_d$.
%\end{corollary} 
%\begin{prop} \label{010} Let $G \cong Q_h(\widehat X)$ and $u \in V(G)$.
%If   $d(u)=h$ and  $G^u = G[\cup_{x\in X'} I(0^h, x)]$,   then $0^{i-1}10^{h-i} \in G^u$.\end{prop} 
%\begin{proof} 
%By Proposition \ref{degree}, for every $i\in [h]$ there exists $x\in X'$ such that $x_i=1$. 
%It follows that every for every   $i\in [h]$   there exists $x\in X'$ such that $0^{i-1}10^{h-i} \in I(0^h,x)$.
%\end{proof} 

\begin{lemma} \label{main}
Let $G = Q_h( X)$ and $u \in V(G)$ such that $d(u)=h$. % and $G^u = G[\cup_{x\in X'} I(0^h, x)]$.
Then $|N^u(v)|\ge 2$ for every $v \in V(G) \setminus N[u]$.
\end{lemma}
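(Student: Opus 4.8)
The plan is to verify precisely the hypothesis of Lemma \ref{partial}, so for each $v\in V(G)\setminus N[u]$ I want to exhibit two neighbours of $v$ lying strictly closer to $u$. The natural case division is whether or not $v$ lies in the subgraph $G^u=G[\cup_{x\in X'}I(0^h,x)]$, where $X'=\{x\in\widehat X: u\le x\}$; note that by Proposition \ref{neighbor} we have $N[u]\subseteq V(G^u)$, so the case $v\notin V(G^u)$ automatically avoids $N[u]$. Before splitting, I would record one preliminary fact. Writing $A=\{j\mid v_j=1,\ u_j=0\}$, a flip of any coordinate $j\in A$ produces a neighbour of $v$ that is always present in $G$ and always closer to $u$: lowering a $1$ to a $0$ only decreases the label, so the result stays below any maximal vertex above $v$ and hence lies in $G$, and it agrees with $u$ in one more coordinate. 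Thus $|N^u(v)|\ge |A|$, and much of the work reduces to controlling $|A|$.

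For the case $v\in V(G^u)$ I would exploit that $G^u$ is a daisy cube with minimal vertex $u$ (as noted after Proposition \ref{x'}, via Lemmas \ref{minimal} and \ref{induced}). Applying Lemma \ref{minimal} to $G^u$, with $u$ in the role of the relocated minimal vertex (legitimate since $u\in\cap_{x\in X'}I(0^h,x)$), the map ${}^{u}\!\beta$ turns $G^u$ into a downward-closed daisy cube whose minimal label is $0^h$. In a downward-closed set every vertex of weight $i$ has exactly $i$ down-neighbours, all present, since lowering any single $1$ keeps the label below the original vertex. Because ${}^{u}\!\beta$ is an isometry sending $u$ to $0^h$, the vertex $v$ acquires weight $d(u,v)$, and hence has $d(u,v)$ neighbours closer to $u$ inside $G^u$; as $G^u$ is an isometric subgraph of $G$ (both are partial cubes), these remain neighbours closer to $u$ in $G$. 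Finally $d(u,v)\ge 2$ because $v\notin N[u]$, so $|N^u(v)|\ge 2$.

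For the case $v\notin V(G^u)$ I would show $|A|\ge 2$, which closes the case by the preliminary fact. Arguing by contradiction, suppose $|A|\le 1$ and deduce $v\in V(G^u)$. If $A=\emptyset$ then $S^v\subseteq S^u$, so $v\le u\le x$ for every $x\in X'$ and $v\in V(G^u)$. If $A=\{a\}$, then every $1$ of $v$ other than $a$ is also a $1$ of $u$, i.e.\ $S^v\subseteq S^u\cup\{a\}$; here I invoke $d(u)=h$, which forces the neighbour of $u$ obtained by raising coordinate $a$ (the word with support $S^u\cup\{a\}$) to lie in $G$, hence there is a maximal vertex $x^{a}\in\widehat X$ above it. Then $S^{x^{a}}\supseteq S^u\cup\{a\}\supseteq S^v$, so $v\le x^{a}$ with $x^{a}\ge u$, whence $x^{a}\in X'$ and $v\in V(G^u)$, a contradiction. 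The delicate point, and the place where the degree hypothesis is genuinely used, is exactly this sub-case $|A|=1$: a single decreasing direction does not suffice on its own, and the main obstacle is to manufacture the second neighbour. The idea that resolves it is that $d(u)=h$ produces, through coordinate $a$, a maximal vertex simultaneously above $u$ and above $v$, which is what collapses the assumption $v\notin V(G^u)$. Everything else — the downward-closed counting in $G^u$, the isometry of $G^u$ in $G$, and the validity of decreasing flips — is routine once this step is in place.
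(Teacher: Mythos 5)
Your proof is correct and follows essentially the same route as the paper's: split on whether $v$ lies in $G^u$, use that $G^u$ is a daisy cube with minimal vertex $u$ in the first case, and in the second case use $d(u)=h$ to rule out $|S^{u-}|\le 1$ and then lower two coordinates of $S^{u-}$ to produce two neighbours closer to $u$. The only cosmetic differences are that where the paper invokes Proposition \ref{degree} to find an $x\in X'$ with $x_a=1$, you rederive the same fact directly from the degree hypothesis via the neighbour of $u$ with support $S^u\cup\{a\}$, and that you spell out the $|A|=0$ subcase and the down-set counting inside $G^u$, which the paper leaves implicit.
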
 

\begin{proof} 
Let $G^u = G[\cup_{x\in X'} I(0^h, x)]$.
By Lemma \ref{minimal} and Lemma \ref{induced}, $G^u$ is  a daisy cube and
$u$ its minimal vertex. It follows that the lemma holds for every
 $v \in V(G^u)$.
Suppose then that $v \not \in \cup_{x\in X'} I(0^h, x)$. Thus, there exists  
$y \in  \widehat X - X'$, such that $v \in I(0^h, y)$. 
Note that $S^u \subseteq \cap_{x\in X'} S^x$.

Let $S^{u+}  = \{ i \, | \, u_i=1, v_i=0 \}$ 
and $S^{u-}  = \{ i \, | \, v_i=1, u_i=0 \}$.  

We first show that  $|S^{u-}|\ge 2$. Suppose to the contrary that $|S^{u-}|= 1$, i.e., there exist 
exactly one index  $i \in [h] \setminus S^{u+}$, such that  $v_i=1$ and $u_i=0$. 
Since $d(u)=h$, by Proposition \ref{degree}, there  exists $x \in X'$ such that $x_i=1$.
Note also that $S^u \subseteq S^x$ and since $x_i=1$, we have $S^v \subseteq S^x$.
 It follows that $v \le x$ and  we obtain a contradiction.

If $|S^{u+}|=0$, then vertices  of  $I(u,v)$ induces a  $|S^{u-}|$-cube in $G$. Thus,
$v$ admits $|S^{u-}|$ neighbors at distance $d(u,v)-1$ from $u$. 
Since $|S^{u-}|\ge 2$, the case is settled. 

If $|S^{u+}|>0$, we may find $i,j \in  S^{u-}$ such that $i \not = j$. 
Let $z$ and $z'$ be vertices obtained from $v$ by setting the $i$-th and $j$-th coordinate to zero, respectively. Obviously, $z,z' \in N^u(v)$. This assertion completes the proof. 
\end{proof}

\begin{figure}[!ht] 
	\centering
		\includegraphics[width=13.5cm]{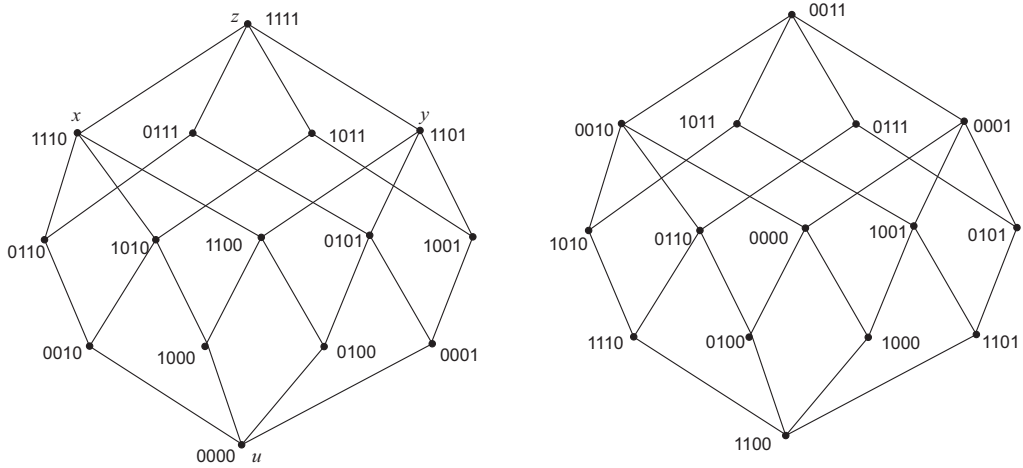}
        \caption{An isometric (left) and proper embeddings of a daisy cube isomorphic to $Q_4^-$.} 
        \label{druga}
\end{figure}

Lemma  \ref{main}  is the basis for the next algorithm which finds an isometric embedding for 
an unlabeled graph isomorphic to a daisy cube of dimension $h$.
%\newpage
\vskip .29cm
\noindent
{\bf Procedure} Embedding($G$, $h$, $\beta$, $u$); \\
%$\{$ $ab \in E(G)$ such that $d(a)=h$ and $d(b)=h-1$ $\}$ \\
\hphantom{ \ } 1. $u$ is a vertex of degree $h$ in $G$; \\
\hphantom{ \ } 2. $\beta(u) := 0^h$; \\
\hphantom{ \ } 3. $i := 0$; \\
\hphantom{ \ } 4. $Q := \emptyset;$  $\{ Q$ is an empty queue$\}$ \\
\hphantom{ \ } 5. {\bf for all}  $v \in V(G)$ {\bf do}  $p(v) := 0;$ \\
\hphantom{ \ } 6. {\bf for all}  $v \in N(u)$ {\bf do begin} \\
\hphantom{ \ \ \ \ \ \ \ \ }  $\beta(v):=0^{i-1}10^{h-i}$;   \\
\hphantom{ \ \ \ \ \ \ \ \ }  $i := i+ 1 $; \\
\hphantom{ \ \ \ \ \ \ \ \ }  $p(v) := u $; \\
\hphantom{ \ \ \ \ \ \ \ \ }  Insert $v$ in the end of $Q$; \\
\hphantom{ \ \ \ \ \  }  {\bf end}; \\
\hphantom{ \ } 7. {\bf while}  $Q \not = \emptyset$ {\bf do begin} \\
\hphantom{ \ \ \ \ \ \ \ \ } 7.1 Remove the first vertex $v$ from $Q$; \\
\hphantom{ \ \ \ \ \ \ \ \ } 7.2. {\bf for all}  $z \in N(v)$ {\bf do } \\
\hphantom{ \ \ \ \ \ \ \ \ \ \ \ \ \ \ \ \ \ }  {\bf if}  $p(z)=0$ {\bf then begin} \\
\hphantom{ \ \ \ \ \ \ \ \ \ \ \ \ \ \ \ \ \ \ \ \ }   $p(z) := v$;   \\
\hphantom{ \ \ \ \ \ \ \ \ \ \ \ \ \ \ \ \ \ \ \ \ }   Append $z$ to the end of $Q$; \\
\hphantom{ \ \ \ \ \ \ \ \ \ \ \ \ \ \ \ \ \ }  {\bf end} \\
\hphantom{ \ \ \ \ \ \ \ \ \ \ \ \ \ \ \ \ \ } {\bf else} $\beta(z)  := \beta(v) \lor \beta(p(z))$; \\
{\bf end.}

\begin{theorem} \label{a1}
If $G$ is a daisy cube, that the isometric embedding of $G$ can be found in linear time.
\end{theorem}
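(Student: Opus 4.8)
The plan is to read the Procedure Embedding as a breadth-first search rooted at a vertex $u$ of degree $h$, and to prove that the labeling $\beta$ it returns coincides with the (essentially unique) isometric embedding furnished by Lemma~\ref{partial}. First I would observe that a suitable $u$ exists and is cheap to locate: since $G$ is a daisy cube of order $h$, its maximum degree equals $h$ and a minimal vertex already realizes it, so a single $O(n+m)$ scan of the degrees finds both $h$ and a vertex $u$ with $d(u)=h$. With such a $u$ fixed, Lemma~\ref{main} guarantees $|N^u(v)|\ge 2$ for every $v\in V(G)\setminus N[u]$, which are exactly the hypotheses of Lemma~\ref{partial}. Hence there is a unique isometric embedding $\alpha$ of $G$ into $Q_h$ with $\alpha(u)=0^h$ and with the labels on $N(u)$ chosen by the procedure, and $\alpha(v)=\lor_{z\in N^u(v)}\alpha(z)$ for every other $v$. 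It therefore suffices to show that the procedure computes $\beta=\alpha$.

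For correctness I would argue $\beta(v)=\alpha(v)$ by induction on $d(u,v)$. The base case $v\in N[u]$ holds because these labels are assigned directly in steps~2 and~6. For the inductive step, recall that the queue dequeues vertices in nondecreasing distance from $u$, so when $v$ is reached its BFS parent $p(v)$ has already been discovered at distance $d(u,v)-1$; thus $p(v)\in N^u(v)$. By Lemma~\ref{main}, $v$ has a second neighbor $v'\in N^u(v)$, and when $v'$ is processed the test $p(v)\ne 0$ sends the algorithm into the else branch, where it sets $\beta(v):=\beta(v')\lor\beta(p(v))$. Both $v'$ and $p(v)$ lie at distance $d(u,v)-1$, so by the induction hypothesis they already carry their $\alpha$-labels. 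In $Q_h$ each label of a down-neighbor of $v$ equals $\alpha(v)$ with exactly one (and a distinct) $1$-coordinate deleted, so the bitwise OR of two distinct down-neighbors restores every $1$ of $\alpha(v)$; that is, $\beta(v)=\beta(v')\lor\beta(p(v))=\alpha(v)$, which also matches the formula $\alpha(v)=\lor_{z\in N^u(v)}\alpha(z)$ of Lemma~\ref{partial}.

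The step I expect to be the main obstacle is controlling exactly which neighbors feed the assignment $\beta(v):=\beta(v')\lor\beta(p(v))$, so that it is always computed from two vertices strictly closer to $u$ and is not later overwritten from a vertex farther from $u$. The clean way to secure this is to maintain the invariant that $\beta$ is correct on every vertex at distance at most $k$ once the whole distance-$k$ layer has been dequeued, and to guarantee that the finalizing assignment to $\beta(v)$ is the one performed while layer $d(u,v)-1$ is active (equivalently, to fire the else branch only for a neighbor $z$ with $d(u,z)=d(u,v)+1$); the FIFO order of the queue, together with the fact that $p(v)$ always sits one layer below $v$, makes this invariant go through. Finally, the running time is immediate once correctness is established: the search inspects each vertex once and traverses each edge a constant number of times, performing at each step a constant number of bitwise operations on $h$-bit labels. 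Counting each such word operation as one step, the total work is $O(n+m)$, that is, linear in the size of $G$.
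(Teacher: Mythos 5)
Your proof follows the same route as the paper's: fix a degree-$h$ vertex $u$, invoke Lemma~\ref{main} to meet the hypotheses of Lemma~\ref{partial}, and argue layer by layer that the BFS-based procedure reproduces that (unique) isometric embedding, with the linear bound coming from constant work per edge. The one point you single out as delicate --- making sure the else-branch assignment is never fed by a neighbour farther from $u$ --- is in fact glossed over in the paper as well (its proof simply asserts $d(u,z)=d(u,v)+1$ for every $z$ considered in Step~7.2), so your explicit statement of that invariant is, if anything, more careful than the original.
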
 

\begin{proof} 
Note first that Lemma \ref{partial} defines the procedure to construct 
an isometric embedding of $G$ into $Q_h$. Let $\alpha$ and $\beta$ be isometric embeddings as defined in Lemma \ref{partial} and algorithm Embedding, respectively. 
Suppose that $u$ is the vertex being labeled $0 ^h$  both by the algorithm 
and by the construction of Lemma  \ref{partial}.  
Clearly, for every $v$ in $N[u]$ we could have $\alpha(v)=\beta(v)$.  
Note also that  in the essence the algorithm  performs a BFS search in $G$ (see for example \cite[Section 17.3]{imkl-00}).
Thus, for every $z \in N(v)$ of Step 7.2 we have $d(u,z) = d(u,p(z))+1 = d(u,v)+1$. 
It follows that  $v, p(z) \in N^u(z)$.  
By Lemma \ref{main}, since $d(u)=h$,  for every $v \in V(G) \setminus N[u]$ we have $|N_G^u(v)|\ge 2$. Therefore,  $\alpha(z)=\beta(z)$ for every 
$z \in  V(G) \setminus N[u]$. 

For the time complexity of the algorithm, note  that the number of the executions of the body of the loop
in Step 7.2 is bounded by the number of edges of a graph. Since the  time complexity 
of the body of the loop is constant, the overall number of step of the algorithm is linear in the number of the edges of the graph.
%If $G$ is a daisy cube of order $h$, then $G$ is an isometric subgraph of $Q_h$. 
%Since $u$ is of degree $h$ in $G$, we can always 
%find an isometric  embedding $\beta$ such that $\beta(u) = 0^h$ while the vertices of $N(u)$
%obtain the labels of the form $0^{i-1}10^{h-i}$. 
%Since $d(u)=h$, by Lemma \ref{main}, for every $v \in V(G) \setminus N[u]$ we have $|N_G^u(v)|\ge 2$. 
%Moreover, for every $v \in V(G) \setminus N[u]$  there exist at least two paths of length 
%$d_{Q_h}(u,v)$ in $G$. 
\end{proof} 

\section{Proper embedding}

\begin{lemma} \label{isotoprop}
Let $G$ be a daisy cube of order $h$, $v$ its minimal vertex and $u$ a vertex  of degree $h$ of $G$.
If $\beta$ is an isometric embedding of $G$ such that $\beta(u)=0^h$, then
%If $Y^u$ is the set of maximal vertices of $G^u$  with respect to $u$, 
$^v\!\beta \circ \beta$ is a proper embedding of $G$.
\end{lemma}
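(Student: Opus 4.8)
The plan is to reduce the claim to Corollary \ref{posledica}: it suffices to verify that ${}^v\!\beta \circ \beta$ is an \emph{isometric} embedding of $G$ into $Q_h$ that assigns the label $0^h$ to the minimal vertex $v$, since Corollary \ref{posledica} then immediately upgrades ``isometric'' to ``proper''. The hypotheses on $u$ (degree $h$ and $\beta(u)=0^h$) only record how $\beta$ arose from the embedding procedure of the previous section; beyond guaranteeing that $\beta$ is an isometric embedding, they will play no role in the argument.

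First I would make the meaning of ${}^v\!\beta$ explicit. Since ${}^v\!\beta$ was defined for a binary string while here $v$ is a vertex of $G$, the intended reading (as in the worked example following Lemma \ref{minimal}) is that ${}^v\!\beta$ complements exactly those coordinates $i$ for which the current label $\beta(v)$ has a $1$; that is, ${}^v\!\beta(w)=w\oplus\beta(v)$ for every $w\in B^h$. The point to record is that translation by a fixed vector is an automorphism of the hypercube: ${}^v\!\beta$ is a bijection of $B^h$, and $H(w\oplus\beta(v),\,w'\oplus\beta(v))=H(w,w')$ for all $w,w'\in B^h$, so ${}^v\!\beta$ preserves Hamming distance on $Q_h$.

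Next I would deduce that ${}^v\!\beta\circ\beta$ is an isometric embedding. As $\beta$ is isometric and ${}^v\!\beta$ preserves distances in $Q_h$, for all $w,w'\in V(G)$ we obtain
$$ d_{Q_h}\bigl({}^v\!\beta(\beta(w)),\,{}^v\!\beta(\beta(w'))\bigr)=d_{Q_h}(\beta(w),\beta(w'))=d_G(w,w'), $$
so distances are preserved by the composite. Moreover $({}^v\!\beta\circ\beta)(v)={}^v\!\beta(\beta(v))=\beta(v)\oplus\beta(v)=0^h$, so the minimal vertex $v$ receives the label $0^h$.

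Finally, I would invoke Corollary \ref{posledica} with $\alpha:={}^v\!\beta\circ\beta$: the graph $G$ is a daisy cube of order $h$, $v$ is a minimal vertex of $G$, and ${}^v\!\beta\circ\beta$ is an isometric embedding sending $v$ to $0^h$, whence ${}^v\!\beta\circ\beta$ is proper. I expect no genuine obstacle in this argument; the only care needed is the correct interpretation of ${}^v\!\beta$ as the map $w\mapsto w\oplus\beta(v)$ together with the observation that such a map is a distance-preserving automorphism of $Q_h$, after which the statement follows directly from Corollary \ref{posledica}.
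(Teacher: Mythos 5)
Your proposal is correct and follows essentially the same route as the paper: verify that ${}^v\!\beta\circ\beta$ is isometric (since ${}^v\!\beta$, read as XOR with $\beta(v)$, is a distance-preserving automorphism of $Q_h$) and sends the minimal vertex $v$ to $0^h$, then invoke Corollary \ref{posledica}. The paper's own proof is just a terser version of exactly this argument.
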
 
\begin{proof}
Note that $^v\!\beta (\beta(v)) = 0^h$.
Since $\beta$ is isometric, it is easy to see that $^v\!\beta \circ \beta$  is also isometric.
Corollary \ref{posledica} now yields the assertion. 
%Thus, $^v\!\beta \circ \beta$  assign to the vertices of $N(v)$ 
%pairwise different labels of the form $0^{i-1}10^{h-i}$, $i\in [h]$.
%Moreover, by Lemma \ref{partial}, $G$ admits exactly one isometric embedding if the labels 
%of $N[v]$ are fixed. It follows that $^v\!\beta \circ \beta$ is a proper embedding of $G$.
\end{proof}

Let $u$ be a vertex of degree $h$ of $G = Q_h(X)$. % and $G^u = G[\cup_{x\in X'} I(0^h, x)]$.
Recall that  $G^u$ is a daisy cube of order $h$ and $u$ its minimal vertex.
If $\beta$ is an isometric embedding of $G$ such that $\beta(u)=0^h$,
let $Y^u$ be the set of maximal vertices of $G^u$ with respect to $u$ and let $Z^u$ be the set of vertices $z$ of 
$V(G) \setminus V(G^u)$ with the property $N^u(z) = N(z)$.

%By  for every $y \in Y$ we have $\beta(y) \in X'$

\begin{prop} \label{maxbeta}
Let $u$ be a vertex of degree $h$ of $G = Q_h(X)$ and $G^u = G[\cup_{x\in X'} I(0^h, x)]$.
If $\beta$ is an isometric embedding of $G$ such that $\beta(u)=0^h$,
%If $Y^u$ is the set of maximal vertices of $G^u$  with respect to $u$, 
then $Y^u =\{ y \, | \, \beta(y) = x, x \in X' \}$.
  \end{prop}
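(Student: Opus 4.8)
The plan is to apply Lemma~\ref{minimal} to $G^u$, regarded as a daisy cube in its own right. First I would note that in the canonical labeling of $G=Q_h(X)$ we have $G^u=G[\cup_{x\in X'}I(0^h,x)]=Q_h(X')$; since $X'\subseteq\widehat X$ is an antichain, the identity map is a proper embedding of $G^u$ with minimal vertex $0^h$ and maximal vertex set exactly $X'$. Thus, for the daisy cube $G^u$, the identity plays the role of the proper embedding $\alpha$ of Lemma~\ref{minimal}, with $v^0=0^h$ and with $X'$ in the role of $\widehat X$. I would also record that $G^u$ is isometric in $G$: both are partial cubes sharing the same canonical labels, so for $a,b\in V(G^u)$ one has $d_{G^u}(a,b)=H(a,b)=d_G(a,b)$, and hence the restriction to $V(G^u)$ of any isometric embedding of $G$ is again isometric.

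Next I would check that $u$ is a minimal vertex of $G^u$. By the very definition of $X'$ we have $u\in\cap_{x\in X'}I(0^h,x)$, which is precisely the hypothesis $v\in\cap_{x\in\widehat X}I(v^0,x)$ of Lemma~\ref{minimal} with $v=u$; equivalently, by Theorem~\ref{min} applied to $G^u$, $u\le\land_{x\in X'}x$. Lemma~\ref{minimal}(ii) then yields at once that ${}^{u}\!\beta\circ\mathrm{id}={}^{u}\!\beta$ is a proper embedding of $G^u$ with minimal vertex $u$ and maximal vertex set $\{\,y\mid {}^{u}\!\beta(y)=x,\ x\in X'\,\}$. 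Since $Y^u$ is by definition the maximal vertex set of $G^u$ with respect to $u$, this is exactly the asserted equality for the embedding $\beta={}^{u}\!\beta$, which is itself an isometric embedding of $G$ (it is the restriction of a reflection of $Q_h$) satisfying ${}^{u}\!\beta(u)=0^h$.

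It remains to pass from the particular embedding ${}^{u}\!\beta$ to an arbitrary isometric $\beta$ with $\beta(u)=0^h$. Here I would invoke Corollary~\ref{posledica} together with the uniqueness in Lemma~\ref{partial}: since $u$ is a minimal vertex of the daisy cube $G^u$ and $\beta|_{G^u}$ is isometric with $\beta(u)=0^h$, the restriction $\beta|_{G^u}$ is a proper embedding of $G^u$, and is determined up to the choice of labels on $N(u)$. The intrinsically defined set $Y^u$ is therefore independent of this choice, and fixing the labeling of $N(u)$ to agree with ${}^{u}\!\beta$ gives $\beta|_{G^u}={}^{u}\!\beta$, whence $Y^u=\{\,y\mid\beta(y)=x,\ x\in X'\,\}$. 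I expect this final step---matching the labels $\beta(Y^u)$ to the canonical antichain $X'$ itself rather than to a coordinate-permuted copy of it---to be the only genuine obstacle; it is resolved by the explicit form of the maximal vertex set produced in Lemma~\ref{minimal}(ii), which pins the labels down as $\{\,y\mid {}^{u}\!\beta(y)\in X'\,\}$ rather than merely as the top level of $\beta(G^u)$.
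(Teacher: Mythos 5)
Your proof is correct and follows essentially the same route as the paper's: both identify $u$ as a minimal vertex of the daisy cube $G^u$ so that $\beta$ restricted to $V(G^u)$ is proper, normalize the labels on $N(u)$ and invoke the uniqueness of Lemma~\ref{partial} to conclude $\beta|_{V(G^u)}={}^{u}\!\beta$, and then read off the maximal vertex set from Lemma~\ref{minimal}(ii). The ``up to equivalence'' caveat you flag at the end is handled in the paper by exactly the same w.l.o.g.\ normalization of $N(u)$.
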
 

\begin{proof} 
As noted above, $G^u$ is a daisy cube of order $h$ and
$u$ its minimal vertex. Since $u$ is of degree $h$ and  $\beta(u)=0^h$, 
the restriction of $\beta$ to $V(G^u)$ is a proper embedding of $G^u$.
Moreover, since every  permutation of indices of a proper embedding yields an  equivalent embedding, we may  
assume w.l.o.g. that for every $z \in N(u)$ we have $\beta(z) = 0^{i-1}10^{h-i}$
if and only if $u_i \not = z_i$. 
It follows that for every $w \in N(0^{h})$ we have  $^u\!\beta(\beta(w))=w$. By Lemma \ref{minimal}, $\beta \circ \beta$ is proper. Moreover, 
by Lemma \ref{partial},  $^u\!\beta(\beta(v))=v$ for every $v \in V(G^u)$.
From  Lemma \ref{minimal} then follows that $Y^u =\{ y \, | \, \beta(y) = x, x \in X' \}$.
\end{proof}

\begin{prop} \label{zbeta}
Let $u$ be a vertex of degree $h$ of $G = Q_h(X)$, $G^u = G[\cup_{x\in X'} I(0^h, x)]$ and
$z \in Z^u$. 
If   $\beta$ is an isometric embedding of $G$ and $\beta(u)=0^h$,
then there exists $y \in \widehat X - X'$ such that $z \in I(0^h, y)$. Moreover,
$$\beta_{(i)}(z)  =
        \left \{\begin{array}{ll}
               0,  \;  i  \in S^u \\
              y_i,  \; i \not \in S^u \\             
         \end{array} \right .$$
 \end{prop}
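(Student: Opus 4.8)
The plan is to prove the two assertions in turn, reducing both to an explicit description of $\beta$. \textbf{Existence of the vertex $y$.} Since $z \in V(G) = V(Q_h(X))$, Lemma \ref{induced} gives $z \in I(0^h, x)$ for some $x \in \widehat X$. Because $z \notin V(G^u) = \cup_{x \in X'} I(0^h, x)$, this $x$ cannot belong to $X'$, so $x \in \widehat X - X'$; set $y := x$. In particular $z \le y$, i.e. $z_i = 1$ forces $y_i = 1$, a fact I will reuse.

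\textbf{Reducing the formula to coordinates of $z$.} The main preliminary step is to show that $\beta$ coincides with the map $w \mapsto {}^u\!\beta(w)$ on all of $V(G)$. Indeed, $\deg(u) = h$ and, by Lemma \ref{main}, $|N^u(v)| \ge 2$ for every $v \in V(G) \setminus N[u]$; hence Lemma \ref{partial} applies and $\beta$ is the unique isometric embedding determined by its values on $N[u]$. With the neighbour convention fixed in Proposition \ref{maxbeta}, the cube-automorphism $w \mapsto {}^u\!\beta(w)$ is itself an isometric embedding of $G$ that agrees with $\beta$ on $N[u]$, so uniqueness yields $\beta(w) = {}^u\!\beta(w)$ for every $w \in V(G)$. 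Consequently $\beta_{(i)}(z) = z_i$ when $i \notin S^u$ and $\beta_{(i)}(z) = \overline{z_i}$ when $i \in S^u$, so the claimed formula is equivalent to establishing $z_i = y_i$ for $i \notin S^u$ and $z_i = 1$ for $i \in S^u$.

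\textbf{The coordinates outside $S^u$.} For these I would use the defining property $N^u(z) = N(z)$ of $z \in Z^u$. Fix $i \notin S^u$ (so $u_i = 0$) with $y_i = 1$, and suppose $z_i = 0$. The flip $z \oplus e_i$ then has support $S^z \cup \{i\} \subseteq S^y$, so it lies in $G$ and is a neighbour of $z$; but since $u_i = 0 = z_i$ while $(z \oplus e_i)_i = 1$, this neighbour is \emph{farther} from $u$ than $z$, contradicting $N^u(z) = N(z)$. Hence $z_i = 1$ whenever $y_i = 1$, and together with $z \le y$ this gives $z_i = y_i$, so $\beta_{(i)}(z) = z_i = y_i$, as required.

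\textbf{The coordinates in $S^u$ (the main obstacle).} It remains to pin down $\beta_{(i)}(z)$ for $i \in S^u$, equivalently the value $z_i$ on these coordinates. The plan is to combine $N^u(z) = N(z)$ with $\deg(u) = h$: by Proposition \ref{degree} we have $|\cup_{x \in X'} S^x| = h$ and $S^u \subseteq \cap_{x \in X'} S^x$, and this, together with $z \le y$ for $y \in \widehat X - X'$ and the local-maximality of $z$ with respect to distance from $u$, should fix the value of $z$ on $S^u$ and thus, via $\beta = {}^u\!\beta$, deliver $\beta_{(i)}(z) = 0$ there. Controlling the single-coordinate behaviour of $z$ on $S^u$ — where the downward-closure of the daisy cube, the membership $z \in I(0^h, y)$ with $y$ maximal outside $X'$, and the $Z^u$-condition all interact — is the step I expect to be the main obstacle.
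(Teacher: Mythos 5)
Your setup is sound and in fact mirrors the paper's: existence of $y$ via Lemma \ref{induced}, and your preliminary identification $\beta = {}^u\!\beta$ on all of $V(G)$ (via Lemma \ref{main} and the uniqueness part of Lemma \ref{partial}) is exactly what the paper later records as equation (\ref{observation}) in the proof of Theorem \ref{minz}. Your flip argument for the coordinates $i \notin S^u$ is also correct and coincides with the paper's. But the proof is incomplete precisely where you say it is, and the gap cannot be closed in the form you set it up: after the identification $\beta = {}^u\!\beta$, the printed claim $\beta_{(i)}(z) = 0$ for $i \in S^u$ amounts to $z_i = 1$ on $S^u$, and that is false. The mirror image of your own flip argument proves the opposite: if $i \in S^u$ (so $u_i = 1$) and $z_i = 1$, let $z'$ be obtained from $z$ by setting the $i$-th coordinate to $0$; then $z' \le z \le y$, so $z' \in I(0^h,y) \subseteq V(G)$, and $z'$ is a neighbour of $z$ with $d(u,z') = d(u,z)+1$ (before the flip, $z$ and $u$ agreed in position $i$), contradicting $N^u(z) = N(z)$. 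Hence $z_i = 0$ and $\beta_{(i)}(z) = \overline{z_i} = 1$ for every $i \in S^u$. The paper's own example falsifies the literal statement: in Fig. \ref{druga}, $u = 1100$ and $Z^u = \{0011\}$; since $\beta$ is isometric with $\beta(u)=0^h$, we must have $w(\beta(0011)) = d(u,0011) = 4$, i.e.\ $\beta(0011) = 1111$, whereas the displayed formula would give the weight-two string $0011$.

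So the obstacle you hit is an error in the statement, not a missing idea on your side. The paper's proof (in which the letter $v$ is a typo for $z$) establishes exactly $z_i = 0$ for $i \in S^u$ and $z_i = y_i$ for $i \notin S^u$; that is, the displayed formula is correct for the coordinates of $z$ itself, equivalently $\beta(z) = u \lor y$, and this corrected version is what the proof of Theorem \ref{minz} and the $Q_4^-$ example actually use (there $\hat z = 1111$, which has $1$s, not $0$s, on $S^u$). To complete your write-up: replace your target $z_i = 1$ on $S^u$ by $z_i = 0$, prove it by the two-line flip argument above (your anticipated machinery --- Proposition \ref{degree} and a further interaction with $d(u)=h$ --- is not needed for this step), and state the conclusion as $z_i = 0$ for $i \in S^u$ and $z_i = y_i$ otherwise, equivalently with $1$ in place of $0$ in the first line of the display for $\beta_{(i)}(z)$. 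With that correction your route is essentially identical to the paper's, and is in fact more careful than the paper about why $\beta$ may be assumed to equal ${}^u\!\beta$.
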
 

\begin{proof} 
Let $G^u =G[\cup_{x\in X'} I(0^h, x)]$. By Lemma \ref{induced}, since 
$z \not \in  \cup_{x\in X'} I(0^h, x)$,  there must be $y
\in \widehat X - X'$ such that $z \in I(0^h, y)$. 
By $N^u(z) = N(z)$, we have $d(u,z) \ge d(u,v)$ for every $v \in  I(0^h, y)$.
If $v_i = 1$ for some $i \in S^u$,  then let $v'$ be the vertex of $G$ such that $v_j' = v_j$ for  
every $j\not = i$ and $v_i' = 0$. Obviously, $v' \le y$, thus $v' \in  I(0^h, y)$. Moreover, 
 since $\beta_{(i)}(v') = 1$, we have $d(u,v') > d(u,v)$ and we obtain a contradiction. 
 It follows that the assertion holds for 
 every  $i \in S^u$.  
 If $i \not \in S^u$, then $\beta_{(i)}(v) = v_i$ for every $v \in  I(0^h, y)$. 
 Since $y$ is maximal in $I(0^h, y)$, the assertion follows.
\end{proof} 

\begin{theorem} \label{minz}
Let $u$ be a vertex of degree $h$ of $G = Q_h(X)$. 
If   $\beta$ is an isometric embedding of $G$ such that $\beta(u)=0^h$,
$\hat y = \land_{y \in Y^u} \beta(y)$ and
$\hat z = \land_{z \in Z^u} \beta(z) \land 1^h$, then 
 $\beta^{-1}(\hat y \land \hat z)$ is a minimal vertex of $G$. 
 \end{theorem}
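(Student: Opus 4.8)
The plan is to reduce everything to Theorem~\ref{min}. Writing $m:=\beta^{-1}(\hat y \land \hat z)$, it suffices to prove $m\in I(0^h,w)$ for every $w\in\widehat X$, i.e. $S^m\subseteq \bigcap_{w\in\widehat X}S^w=S^{\hat x}$, since then $m\le\hat x$ and Theorem~\ref{min} gives that $m$ is a minimal vertex. First I would pin down $\beta$. Because $G$ is a partial cube, an isometric embedding with $\beta(u)=0^h$ is determined up to a permutation of coordinates (Lemma~\ref{partial}(ii)); such a permutation commutes with $\land$ and is cancelled when $\beta^{-1}$ is applied, so $m$ is independent of the choice and I may take $\beta={}^{u}\!\beta$ (so $\beta(v)=v\oplus u$), whence $\beta^{-1}={}^{u}\!\beta$ as well. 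With Proposition~\ref{maxbeta} this yields $\hat y=\bigwedge_{x\in X'}x$, so $S^{\hat y}=\bigcap_{x\in X'}S^x\supseteq S^u$.

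Next I would split $\widehat X$ into $X'$ and $\widehat X-X'$ and treat the two parts separately. The starting observation is that every $z\in Z^u$ satisfies $z_i=0$ for $i\in S^u$: if $z_i=1$ for some $i\in S^u$, then $z\oplus e_i$ is a neighbour of $z$ lying in $G$ (by downward closure) and farther from $u$, contradicting $N^u(z)=N(z)$; this is precisely the coordinate pattern recorded in Proposition~\ref{zbeta}. Evaluating $m={}^{u}\!\beta(\hat y\land\hat z)$ coordinate-wise on $S^u$ and on its complement then gives the clean description $S^m=\big(\bigcap_{x\in X'}S^x\big)\cap\big(\bigcap_{z\in Z^u}S^z\big)$. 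For $w\in X'$ the inclusion is now immediate, since $S^m\subseteq\bigcap_{x\in X'}S^x=S^{\hat y}\subseteq S^w$; this is the routine direction and uses only Proposition~\ref{maxbeta} and the definition of ${}^{u}\!\beta$.

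For $w\in\widehat X-X'$ the point is to show $S^m\subseteq S^w$. By Proposition~\ref{zbeta} each $z\in Z^u$ lies in some $I(0^h,y)$ with $y\in\widehat X-X'$, so the supports $S^z$ reflect the off-$S^u$ part of the corresponding $S^y$; using the description of $S^m$ above I would argue that any $i\in S^m$ with $i\notin S^w$ is impossible. The hard part will be exactly this coverage claim: that $Z^u$ witnesses every maximal vertex outside $G^u$, i.e. that for each $w\in\widehat X-X'$ and each candidate coordinate $i\in S^m\setminus S^w$ there is some $z\in Z^u$ with $z_i=0$ (forcing $i\notin\bigcap_{z\in Z^u}S^z$ and hence $i\notin S^m$). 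My approach would be to start from $w\land\overline u\in I(0^h,w)$, which has $(w\land\overline u)_i=w_i=0$ for the relevant $i\notin S^u$, and to push it upward to an $N^u$-sink while keeping coordinate $i$ equal to $0$, controlling the neighbours by Lemma~\ref{main} and Propositions~\ref{edge} and~\ref{neighbor}. The delicate point, where I expect to spend the most effort, is that such an upward push may leave the subcube $I(0^h,w)$, so one must ensure that a sink avoiding coordinate $i$ still belongs to $Z^u$; this coverage step is the crux on which the whole statement turns.
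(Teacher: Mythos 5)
Your strategy coincides with the paper's own: identify $\beta$ with the coordinate flip ${}^{u}\!\beta$ (observation (\ref{observation}) in the paper), use Proposition~\ref{maxbeta} to replace $\hat y$ by $\land_{x\in X'}x$, use Proposition~\ref{zbeta} to describe the vertices of $Z^u$, compute $S^m$ for $m=\beta^{-1}(\hat y\land\hat z)$ coordinate-wise, and reduce minimality to $S^m\subseteq\bigcap_{w\in\widehat X}S^w$ via Theorem~\ref{min} (the paper routes this through Proposition~\ref{x'}, which amounts to the same reduction). Your normalization of $\beta$, the formula $S^m=\bigl(\bigcap_{x\in X'}S^x\bigr)\cap\bigl(\bigcap_{z\in Z^u}S^z\bigr)$, and the $w\in X'$ half of the argument are all correct.

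The gap is the one you name yourself: the ``coverage'' claim for $w\in\widehat X\setminus X'$ is never proved, so the proposal is a plan rather than a proof, and the step you call the crux is precisely the step that is missing. The difficulty you anticipate is real: the natural witness $w\land\overline u$ need not belong to $Z^u$, since it can have a neighbour farther from $u$ lying in a different maximal cube. Concretely, take $h=5$, $\widehat X=\{11100,11010,11001,10110,00111\}$ and $u=11000$ (a vertex of degree $5$ with $X'=\{11100,11010,11001\}$); then $Z^u=\{00111\}$, and the maximal vertex $10110$ has no witness in $Z^u$ at all, because $10110\land\overline u=00110$ has the farther neighbour $00111$. (The theorem still holds there since $S^{\hat x}\setminus S^u=\emptyset$, but the example shows your intended upward-push argument cannot be carried out verbatim.) It is worth knowing that the paper's own proof does not close this step either: it shows for each $z\in Z^u$ with associated $y_z$ that $S^{\beta^{-1}(\hat x\land\beta(z))}\subseteq S^{\hat x\land y_z}$ and then declares the proof complete by ``repeating for every $z\in Z^u$,'' which only yields $S^m\subseteq S^{\hat x}\cap\bigcap_{z}S^{y_z}$ and tacitly assumes every $y\in\widehat X\setminus X'$ is represented by some $z\in Z^u$. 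So you have correctly located the weak point of the argument, but until the coverage step (or a substitute for it) is actually established, your proposal does not constitute a proof of the theorem.
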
 

\begin{proof} 
Note first that $\beta =  \beta^{-1}$, thus, for every $b \in B^h$ and every $i \in [h]$ it holds 
\begin{equation}
  \beta_{(i)}(b)  = \beta^{-1}_{(i)}(b)  =
        \left \{\begin{array}{ll}
               \bar b_i,  \;  i  \in S^u \\ \label{observation}
               b_i,  \; i \not \in  S^u\\             
         \end{array} \right .
\end{equation}

Let $\hat x = \land_{x \in X'} x$. By Proposition \ref{maxbeta},
 we have $Y^u =\{ y \, | \, \beta(y) = x, x \in X' \}$. Thus, $\hat x = \hat y$.
Note that by Proposition \ref{x'}, every  minimal vertex of $G$ belongs to $I(0^h, \hat x)$. 

If $X' = \widehat X$, then $Z^u = \emptyset$ and we get 
$\beta^{-1}(\hat y \land \hat z) = \beta^{-1}(\hat y)=\beta^{-1}(\hat x)$.  
By (\ref{observation}), we have $\beta^{-1}(\hat x) \le x$. It follow that 
$\beta^{-1}(\hat x)  \in I(0^h, \hat x)$ and we are done.

Otherwise, let $z \in Z^u$ such that $z \in I(0^h, y)$ for some $y \in \widehat X - X'$. 
We have to show that  
 $\beta^{-1}(\hat x \land \beta(z))$ is a minimal vertex of
 $ \cup_{x \in X'} I(0^h, x) \cup I(0^h, y)$, i.e.  
 $S^{\beta^{-1}(\hat x \land \beta(z))} \subseteq S^{\hat x \land y}$.
 
 By Proposition \ref{zbeta}, we have
 $$\beta_{(i)}(z)  =
        \left \{\begin{array}{ll}
               0,  \;  i  \in S^u \\
              y_i,  \; i \not \in  S^u  \\             
         \end{array} \right .$$

Since $S^u \subseteq S^{\hat x}$, we have 
 $$ (\hat x \land \beta(z))_i  =
        \left \{\begin{array}{ll}
              y_i,  \; i \not \in S^{\hat x} \setminus S^u \\     
               0,  \;  \, {\rm otherwise}\\        
         \end{array} \right .$$

By (\ref{observation}), we have $\beta^{-1}_i(\hat x \land \beta(z))  = 0$ for every 
$i \in [h]\setminus S^{\hat x \land y}$. Since we can repeat the above discussion for every $z \in Z^u$, the proof is complete.
\end{proof} 

Fig. \ref{druga} shows two embeddings  of a daisy cube $G$ isomorphic to $Q_4^-$.
The embedding $\beta$ on the left hand side is determined  such that $\beta(u)=0000$ (note that
$d(u)=4$). Since  $u$ is not minimal in $G$, the embedding $\beta$ is isometric but not proper. 
From $X'=Y^u = \{ x, y \}$ and $Z^u = \{ z \}$ we get $\hat y = 1110 \land 1101 = 1100$, 
$\hat z = 1111$ and $ \hat y  \land \hat z  = 1100 \land 1111 = 1100$. 
Moreover, the minimal vertex of  $G$ is $v=\beta^{-1}(1100)$ and 
$^v\!\beta \circ \beta$ is the proper embedding of $G$ as described in Lemma \ref{isotoprop}. That is to say, we obtain the proper embedding 
of $G$ by assigning $\beta(w) \oplus 1100$  to every $w \in V(G)$.

Theorem \ref{minz}  is the basis for the next algorithm, which finds a proper embedding of 
a graph isomorphic to a daisy cube of dimension $h$.

%\newpage
\vskip .29cm
\noindent
{\bf Procedure} Proper($G$, $h$, $\alpha$); \\
%$\{$ $ab \in E(G)$ such that $d(a)=h$ and $d(b)=h-1$ $\}$ \\
\hphantom{ \ } 1. Embedding($G$, $h$, $\beta$, $u$); \\
%\hphantom{ \ } 2. $Q$ := a list with $u$ as its only element; \\
\hphantom{ \ } 2. {\bf for }  $i :=1$ {\bf to} $h+1$ {\bf do} $W_i := \emptyset$; \\
\hphantom{ \ } 3. {\bf for all}  $v \in V(G)$ {\bf do }  
$W_{w(\beta(v))} := W_{w(\beta(v))} \cup \{ v \}$;  \\
\hphantom{ \ } 4. {\bf for all}  $v \in V(G)$ {\bf do} $q(v) := 0$; \\
%\hphantom{ \ \ \ \ \ \ \ \ } 5.1. 
\hphantom{ \ } 5. {\bf for }  $i :=1$ {\bf to} $h$  {\bf do begin} \\
\hphantom{ \ \ \ \ \ } 5.1. {\bf for all}  $x \in W_i$ {\bf do }  \\
\hphantom{ \ \ \ \ \ \ \ \ \  \ \ }  5.1.1 {\bf if}  $\sum_{y \in N(x) \cap W_{i-1}} q(y) = i(i-1)$ {\bf then begin} \\
\hphantom{ \ \ \ \ \ \ \ \ \ \ \ \ \ \ \ \ \ \ \ \ \ }   $q(x) := i$;   \\
\hphantom{ \ \ \ \ \ \ \ \ \ \ \ \ \ \ \ \ \ \ \ \ \ }   {\bf for all}  $y \in N(x) \cap W_{i-1}$ {\bf do}  
$q(y) := 0$;   \\
\hphantom{ \ \ \ \ \ \ \ \ \ \ \ \ \ \ \ \ \ \ }  {\bf end}  \\
\hphantom{ \ \ \ \ \ \ \ \ \ \ \ \ }  5.1.2 {\bf else if}  $N(x) \cap W_{i+1} = \emptyset $ {\bf then} $q(x) := i$ \\
\hphantom{ \ } 6. $s := 1^h$; \\
\hphantom{ \ } 7. {\bf for all}  $v \in V(G)$ {\bf do}  \\
\hphantom{ \ \ \ \ \ \ \ \ } 7.1. {\bf if }  $q(v) \not =  0$ {\bf then}  $s := s \land \beta(v)$;  \\
\hphantom{ \ } 8. {\bf for all}  $v \in V(G)$ {\bf do}  $\alpha(v) := s \oplus \beta(v)$;\\
{\bf end.}

 \begin{theorem} \label{proper}
A proper embedding of an unlabeled graph isomorphic to a daisy cube can be found in linear time.
 \end{theorem}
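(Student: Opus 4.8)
The plan is to establish correctness and the linear time bound of Procedure Proper separately, relying on the machinery developed in the preceding sections. The overall strategy is that the procedure first invokes Embedding to obtain an isometric embedding $\beta$ with $\beta(u)=0^h$ for a vertex $u$ of degree $h$, then computes the shift vector $s = \hat y \land \hat z$ exactly as characterized in Theorem \ref{minz}, and finally outputs $\alpha(v) = s \oplus \beta(v)$ for every vertex. By Lemma \ref{isotoprop}, once $s$ correctly identifies $\beta(v^{\ast})$ for the minimal vertex $v^{\ast}$, the map $\alpha = {}^{v^{\ast}}\!\beta \circ \beta$ is precisely the promised proper embedding, since applying ${}^{v^{\ast}}\!\beta$ to a label amounts to XOR-ing with $\beta(v^{\ast}) = s$. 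Thus the heart of the correctness argument is to verify that Step 5 identifies exactly the vertices of $Y^u \cup Z^u$ needed to form $s$, and that Step 7--8 then produce the correct shift.

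First I would argue correctness of the vertex-marking in Step 5. The key is that the loop processes vertices by increasing weight $i$ and uses the flag $q$ to detect the maximal vertices $Y^u$ and the special vertices $Z^u$. For a vertex $x \in W_i$, condition 5.1.1 tests $\sum_{y \in N(x) \cap W_{i-1}} q(y) = i(i-1)$; the idea is that a genuine maximal vertex of the cube generated below it should have $i$ lower neighbors each already marked with value $i-1$, giving the sum $i(i-1)$, whereupon $x$ is marked and its lower neighbors are unmarked so they are not double-counted. By Proposition \ref{kocka} we have $|N(x) \cap W_{i-1}| \le i$, which bounds the sum and ensures the test fires only in the intended situation. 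Condition 5.1.2 handles the elements of $Z^u$: a vertex with no neighbor of higher weight is maximal in $G$ but need not be a maximal vertex of $G^u$, and these are exactly the $z \in Z^u$ whose $\beta$-labels (characterized in Proposition \ref{zbeta}) must be incorporated into $s$. I would show that after Step 5 the set $\{v : q(v) \ne 0\}$ consists precisely of the vertices contributing to $\hat y$ and $\hat z$, so that Step 7 computes $s = \land_{q(v)\ne 0}\beta(v)$, which equals $\hat y \land \hat z$ by Theorem \ref{minz}. That $s = \beta(v^{\ast})$ for a minimal vertex $v^{\ast}$ is then exactly the content of Theorem \ref{minz}, and Lemma \ref{isotoprop} finishes correctness.

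For the time complexity, I would observe that Step 1 runs in linear time by Theorem \ref{a1}, and Steps 2--4 are clearly linear. The only delicate point is Step 5: although it is a doubly nested loop, the total work is $O(m)$ because each vertex $x$ is processed once in its weight class, and the cost of evaluating the sum in 5.1.1 and of resetting flags in the inner \textbf{for all} is proportional to $|N(x) \cap W_{i-1}| \le i$, so summing over all vertices the total is bounded by $\sum_{x} \deg(x) = O(m)$. Steps 6--8 are again linear. Since a daisy cube is a partial cube and hence has $m = O(n \log n)$, but more to the point the algorithm's running time is $O(m)$ in the size of the input graph, the bound is linear in $n + m$.

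I expect the main obstacle to be the rigorous justification of the flag arithmetic in Step 5, specifically proving that the numerical test $\sum q(y) = i(i-1)$ fires on exactly the correct set of vertices and that the flag-resetting does not corrupt the count for later weight classes. One must argue that when a vertex is correctly recognized as maximal in $G^u$ the $i$ contributing lower neighbors indeed carry flag value $i-1$ (so that their genuine maximality status has been resolved earlier), and conversely that no spurious vertex accumulates the sum $i(i-1)$; this requires combining the structural facts of Propositions \ref{maxbeta}, \ref{zbeta} and \ref{kocka} with a careful induction on $i$. The remaining pieces — that the shift $s$ equals $\beta(v^{\ast})$ and that $\alpha$ is therefore proper — follow almost immediately from Theorem \ref{minz} and Lemma \ref{isotoprop}.
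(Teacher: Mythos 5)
Your proposal is correct and follows essentially the same route as the paper: invoke Theorem \ref{a1} for the isometric $\beta$, prove by induction on the weight $i$ (using Proposition \ref{kocka} to justify that the sum $i(i-1)$ fires exactly when all $\le i$ lower neighbours carry flag $i-1$) that Step 5 marks precisely $Y^u \cup Z^u$, then apply Theorem \ref{minz} and Lemma \ref{isotoprop}, with the same degree-sum accounting for the $O(m)$ bound. The induction you flag as the ``main obstacle'' is exactly the argument the paper carries out, so nothing essential is missing.
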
 

\begin{proof} 
We first show that the algorithm Proper finds a proper embedding of $G$. As shown in Theorem 
\ref{a1}, embedding  $\beta$ provided by the algorithm Embedding is isometric. 
With respect to Theorem \ref{minz} and Step 7, we have to show that if 
$q(v) \not = 0$, then either $v\in Y^u$ or   $v\in Z^u$.
% If $v \not \in Y^u \cup Z^u$. 
Clearly, in Step 3, all vertices at distance $i$ from $u$ are inserted in $W_i$, while in 
Step 4,  $q(v)$ is set to 0 for every $v \in V(G)$.
The value of  $q(v)$ is altered  either in Step 5.1.1 or in Step 5.1.2. 

%If the value is altered  in 5.1.2, then 

Let $w(x)=i$. We show that $q(x)=i$ in the $i$-th iteration of for loop if and only if either 
$I(u,x)$ induces an $i$-cube or $x \in Z^u$. 
Note that $I(u,x)$ induces an $i$-cube, if and only $|N(x) \cap W_{i-1}| = i$ %N^u(x)$ admits exactly $i$ vertices 
and for every $y \in N(x) \cap W_{i-1}$ the set  $I(u,y)$ induces a $(i-1)$-cube.
Moreover, if $x\in Y^u$, then $I(u,x)$ induces a maximal $i$-cube in $G^u$. 

In the first iteration of Step 5, for every vertex  of $W_1$ the value of  $q$ is set to 1.
In the next iteration, when a vertex $x$ of $W_2$ is considered, these values for two vertices of 
$W_1$, say $y$ and $y'$,  are set to zero if $\{ u, y, y', x \}$ induce a 2-cube. 
Thus,  for every $x,y \in W_1 \cup W_2$ we have 

- $q(y)=1$ if and only if $x\in N(u)$ and there is no vertex $y \in W_2$ such that 
 $I(u,y) \subseteq I(u,x)$  and $I(u,x)$ induces $Q_2$.
 
 - $q(x)=2$ if and only $I(u,x)$ induces $Q_2$.

Suppose now that for $i\ge 3$  and $y \in W_{i-1}$ it holds that  $q(y)=i-1$ if  and only if 
$I(u,y)$ induces a maximal cube in $G[W_1 \cup W_2  \ldots \cup  W_{i-1}]$ or $N^u(y)=N(y)$;
otherwise, $q(y)=0$.
Let $w(x)=i$. Note that $|N(x) \cap W_{i-1}| \le i$ by Proposition \ref{kocka}. 
Thus, the condition of the if statement in Step 5.1.1 is fulfilled if and only if for every 
$y \in N(x) \cap W_{i-1}$ we have $q(y)=i-1$, i.e.  for every
$y \in N(x) \cap W_{i-1}$ the set $I(u,y)$ induces an $(i-1)$-cube. 
If the condition of the if statement returns true, then $q(x)$ obtains the value $i$ while 
for every  $y \in N(x) \cap W_{i-1}$ the value of  $q(y)$ is set to 0. 
If the condition of the if statement returns false, then $q(x)$  is set to $i$ if and only if 
$N(x) \cap W_{i+1} = \emptyset $, i.e. $x \in Z^u$.  
Thus, we showed that in the $i$-th iteration of the for loop $q(x)=i$ if and only if  either 
$I(u,x)$ induces an $i$-cube or $x \in Z^u$. 
Since the claim holds for every $i$, we showed that if 
$q(v) \not = 0$, $v \in V(G)$, then either $v\in Y^u$ or   $v\in Z^u$. 
From Theorem \ref{minz}  then it follows that the string $s$ computed 
in Step 7 is equal to $\hat y \land \hat z$, where 
$\hat y = \land_{y \in Y^u} \beta(y)$ and
$\hat z = \land_{z \in Z^u} \beta(z)$. 
By Theorem \ref{minz}, $\beta^{-1}(s)=v$ is a minimal vertex of $G$ while the embedding $\alpha$ obtained in Step 8 is equal to  $^v\!\beta \circ \beta$. Moreover,  $\alpha$  is  proper by Lemma \ref{isotoprop}.

In order to consider the time complexity of the algorithm,  
note first that all  steps of the algorithm except Step 5 can be executed in $O(m)$ time, where $m$ is the  number of edges of $G$. 
For the time complexity of Step 5 it is convenient to store the weights of vertices in a vector, which allows that the weight of a vertex and therefore its  inclusion in a set $W_i$ can be   
determined in constant time. Thus, the time complexity of Steps 5.1.1 and 5.1.2 is linear 
in the number of edges incident with the vertex $x$. Since Step 5 is performed for every 
vertex of the graph, the total number of steps is bounded by the number of edges of $G$. 
This assertion concludes the proof.
\end{proof}

\section*{Acknowledgements}
This work was supported by the Slovenian Research Agency under the grants P1-0297, J1-9109 and J1-1693. 
  % I am indebted to  anonymous reviewers for their careful reading and helpful suggestions which improve and clarify the paper.

%% The Appendices part is started with the command \appendix;
%% appendix sections are then done as normal sections
%% \appendix

%% \section{}
%% \label{}

%% If you have bibdatabase file and want bibtex to generate the
%% bibitems, please use
%%
%%  \bibliographystyle{elsarticle-harv} 
%%  \bibliography{<your bibdatabase>}

\begin{thebibliography}{22}

\bibitem{epp}
D. Eppstein, Recognizing Partial Cubes in Quadratic Time, 
J. Graph Algorithms Appl.  15  (2011)  269--293.


\bibitem{imkl-00}
R. Hammack, W. Imrich, S. Klav\v zar, 
Handbook of Product Graphs, Second Edition. CRC Press, Boca Raton (2011).




\bibitem{daisy}
  S.~Klav\v zar, M.~Mollard, Daisy cubes and distance cube polynomial   European J. Combin. 
  80 (2019) 214–223. 
%  https://doi.org/10.1016/j.ejc.2018.02.019.



\bibitem{mulder3}
H. M. Mulder, The interval function of a graph, Mathematical Centre tracts, 132, Amsterdam (1980).



\bibitem{andrej} A. Taranenko,  Daisy cubes: A characterization and a
generalization,  European J. Combin. 85 (2020) 103058.


\bibitem{jaz}
A. Vesel, Cube-complements of generalized Fibonacci cubes, Discrete Math. 
342 (2019) 1139--1146.

\bibitem{Win}
P. Winkler,  Isometric embeddings in products of complete graphs, Discrete Appl. Math. 7 (1984) 221--225.


\bibitem{petra}
P. \v Zigert Pleter\v sek, Resonance Graphs of kinky benzenoid systems are daisy cubes, MATCH Commun. Math. Comput. Chem.  80 (2018) 207--214. 





\end{thebibliography}

%% else use the following coding to input the bibitems directly in the
%% TeX file.

\section*{Bibliography}

\end{document}